\documentclass{article}
\usepackage{graphicx} 
\usepackage[margin=25mm]{geometry}
\usepackage{amsmath}
\usepackage{amsfonts}
\usepackage{amssymb}
\usepackage{amsthm}
\usepackage{mathtools} 
\usepackage{enumitem} 
\usepackage{hyperref}
\hypersetup{
    colorlinks,
    linkcolor=black,
    citecolor=black,
    filecolor=black,      
    urlcolor=black,
}

\newtheorem{theorem}{Theorem}[section]
\newtheorem{lemma}[theorem]{Lemma}
\newtheorem{prop}[theorem]{Proposition}

\theoremstyle{definition}
\newtheorem{definition}[theorem]{Definition}
\newtheorem{notation}[theorem]{Notation}

\newtheorem{remark}[theorem]{Remark}

\title{Log Canonical Thresholds for Plane Curves in Arbitrary Characteristic}
\author{Chih-Kuang Lee}
\date{}

\begin{document}

\maketitle

\begin{abstract}
    We generalize the formula for the log canonical threshold(LCT) of plane curves over the complex numbers to arbitrary characteristics. Our proof relies purely on valuation theory, instead of on the theory of $D$-modules.
\end{abstract}

\section{Introduction}
\label{section: Introduction}

Let $\Bbbk$ be an algebraically closed field, $R=\Bbbk[[x,y]]$ the formal power series ring in two variables with maximal ideal $\mathfrak{m}=(x,y)$, $X=\operatorname{Spec}R$ the affine spectrum, $o$ the vanishing locus of $\mathfrak{m}$, and $C$ an analytically irreducible plane curve defined by an element $f\in \mathfrak{m}$. The goal of this paper is to compute the log canonical threshold (LCT) of the pair $(X, C)$. We can assume that the tangent cone of $C$ is the $x$-axis by performing a change of coordinates. In this case, the LCT of the pair is given as follows.

\begin{theorem}
\label{thm: LCT_Formula}
Let ${f}\in\Bbbk[[x,y]]$ be irreducible, and assume that the tangent cone of $V({f})$ is defined by the equation $y^{\operatorname{ord}_\mathfrak{m}({f})} = 0$. Write $v_{{f}}$ for the corresponding non-normalized curve semivaluation. Then, we have
\[
\operatorname{lct}_o({f}) = \frac{1}{v_{{f}}(x)}+\frac{1}{v_{f}(y)}.
\]
\end{theorem}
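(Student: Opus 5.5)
We compute $\operatorname{lct}_o(f)$ through the valuative characterization on the valuative tree of $R$:
\[
\operatorname{lct}_o(f)=\inf_{v}\frac{A(v)}{v(f)},
\]
where $v$ ranges over the divisorial valuations centered at $o$, normalized by $v(\mathfrak m)=1$, and $A$ is the log discrepancy. Since $X$ is regular of dimension two, resolution of singularities by point blow-ups holds in every characteristic. Hence this formula, with the infimum taken over divisorial valuations and in fact attained at an exceptional divisor of an embedded resolution of $C$, needs no characteristic assumption.

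\textbf{Step 1: parametrize the tree.} Use the structure of the valuative tree in arbitrary characteristic. It is built purely from key polynomials (MacLane–Favre–Jonsson), not from Puiseux series, so it is available when Puiseux expansions fail in characteristic $p$. Let $m=v_f(y)$ and $n=v_f(x)$. The tangent-cone hypothesis gives $\operatorname{ord}_{\mathfrak m} f=n<m$. Let $v_{f}'=v_f/n$ be the normalized curve semivaluation, and consider the segment $[\operatorname{ord}_{\mathfrak m},v_f']$. For any $v$, its retraction $r(v)$ onto this segment satisfies $v(f)=r(v)(f)$ when $v$ lies off the segment beyond $r(v)$, while $A(v)\ge A(r(v))$. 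The reason is that $A$ increases along branches moving away from the root, and the value on $f$ is determined by $v\wedge v_f'$. So the infimum is attained on the segment itself.

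\textbf{Step 2: compute on the segment.} Parametrize the segment by skewness $\alpha$ (or by thinness $A$) and express $v(f)$ and $A(v)$ in terms of the generic characteristic exponents of $v_f$, namely the values of the key polynomials. Along the segment, $v(f)=\alpha\cdot n$ near $v_f'$, in the appropriate normalization. The function $A$ is piecewise affine in $\alpha$, with slope changing at the divisorial points where a new key polynomial appears. Therefore $A(v)/v(f)$ is a ratio of piecewise affine functions, and its minimum occurs at a break point of the piecewise affine structure.

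\textbf{Step 3: locate the minimizer.} Show the minimum is attained at the first break point, the monomial valuation $v_{m,n}$ with $v(x)=m/\gcd$ and $v(y)=n/\gcd$ (rescaled by $n$). At that point the valuation is monomial, so one computes directly
\[
A(v_{m,n})=v(x)+v(y),\qquad v_{m,n}(f)=v_f(x)v_f(y)/\gcd\text{-scaled}.
\]
This follows because the Newton polygon of $f$ has a single edge from $(0,n)$ to $(m,0)$: $v_f(x)=n$ and $v_f(y)=m$ force $f$ to be "$y^n - x^m$ plus higher weighted terms" with respect to weights $(m,n)$. The ratio is then $\frac{m+n}{mn}=\frac1n+\frac1m$.

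\textbf{Main obstacle.} The key difficulty is the claim in Step 3 that beyond the first break point the ratio does not decrease. Write $\beta_1,\beta_2,\dots$ for the successive generators of the value semigroup, with $e_i$ the corresponding gcds. Along each later piece, $A$ grows with slope roughly $e_{i-1}/e_i\cdot(\text{previous})$, while $v(f)$ grows with slope comparable to $n$. I would first try to verify the inequality using the semigroup inequalities $\beta_{i+1}>(e_{i-1}/e_i)\beta_i$. These inequalities hold in any characteristic by the MacLane key-polynomial construction, and the comparison should reduce to a telescoping check that $A/v(f)$ is increasing past $v_{m,n}$.
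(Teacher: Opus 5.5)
Your architecture is the paper's lower-bound argument. You retract onto $[\operatorname{ord}_\mathfrak m, v_f/n]$ via $v\mapsto v\wedge v_f$, using $v(f)=n\,\alpha(v\wedge v_f)$ and the monotonicity of $A$. You then study $A/\alpha$, which on each piece of constant multiplicity $m_j$ has the form $m_j+C_j/\alpha$. Getting the upper bound by evaluating at the minimizing divisorial valuation is a legitimate shortcut compared with the paper's route through $\operatorname{lct}(f)\le\operatorname{lct}(\operatorname{Monom}(f))$, the Newton polyhedron of $U_k$ and Howald's theorem. It does not even need the Newton polygon. Indeed, $w_1:=v_y\wedge(v_f/n)$ is the monomial valuation with weights $(1,m/n)$, so $w_1(f)=n\,\alpha(w_1)=m$ and $A(w_1)=1+m/n$. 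Note that its weights are proportional to $(n,m)$ on $(x,y)$, not $(m,n)$ as you wrote.

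The genuine problem is the Step~3 claim that $w_1$ is the first break point of $A$.
\begin{itemize}
\item If $n\mid m$, then $n_1=1$ and $U_2=y-\theta x^{m/n}$ has $y$-degree one. The multiplicity therefore stays $1$ beyond $w_1$, along valuations monomial in $(x,y-\theta x^{m/n})$, and there $A/\alpha=1+1/\alpha$ keeps decreasing.
\item For $f=(y-x^2)^2-x^5$ one has $n=2$, $m=4$. The minimum of $A/\alpha$ sits at the valuation with weights $(2,5)$ on $(x,y-x^2)$, and $\operatorname{lct}_o(f)=7/10<3/4$.
\item For smooth $f=y-x^2$ the formula gives $3/2$ instead of $1$.
\end{itemize}
So the statement itself needs the extra hypothesis $v_f(x)\nmid v_f(y)$, i.e.\ $m/n$ is the first characteristic exponent, as in Koll\'ar's formula. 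The paper's proof has the same hole: it asserts $v_1=v_y\wedge v_f$ on the grounds that a non-monomial $w$ must have $m(w)>1$. So you could not have repaired this from it. Under the hypothesis, the correct reason is this: every $w\in(w_1,v_f/n]$ has $U_2$, of $y$-degree $n_1=n/\gcd(n,m)>1$, in its SKP, hence $m(w)\ge n_1>1$.

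Two further steps are asserted rather than proved.

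First, your ``main obstacle'' is a short computation and needs no semigroup inequalities. Take break points $v_j$ with $t_j=\alpha(v_j)$, $t_0=1$, and multiplicity $m_j$ on $(v_{j-1},v_j]$, so $1=m_1<m_2<\cdots$. For $w\in(v_{j-1},v_j]$ with $j\ge2$,
\[
\frac{A(w)}{\alpha(w)}=m_j+\frac{1}{\alpha(w)}\Bigl(1+\sum_{l=1}^{j-1}t_l\,(m_l-m_{l+1})\Bigr).
\]
The bracket is negative because $m_{l+1}-m_l\ge1$ and $t_l>1$. Hence $A/\alpha$ increases on each piece, and continuity of $A$ and $\alpha$ along the segment glues the pieces. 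This is the paper's lemma on $A^{\textup{thin}}/\alpha$.

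Second, and more substantively, you take as known in characteristic $p$ that the normalized log discrepancy is increasing on the tree and piecewise affine in skewness with slope equal to the multiplicity, i.e.\ that it equals $A^{\textup{thin}}$. Key polynomials give the tree, skewness and multiplicity independently of the characteristic, but not this link to log discrepancies. Favre--Jonsson deduce it from $A^{\textup{Farey}}=1+\hat\beta$, a Puiseux-series identity that fails in characteristic $p$. The paper supplies it by induction on point blow-ups, using $a(E)=A_X(E)$ and $b(E)=\operatorname{ord}_E(\pi^*\mathfrak m)$:
\begin{itemize}
\item A blow-up at a free point of $E'$ raises both $a/b$ and $A^{\textup{thin}}$ by $1/b(E')$. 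This is checked on a curve of minimal multiplicity through the new divisor.
\item A blow-up at $E'\cap E''$ yields the same mediant-type relation for both sides.
\end{itemize}
Without this input, your Steps~1 and~2 are not justified in positive characteristic.
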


Our result generalizes the statement in \cite[Example 8.9]{Kollar1997}, which says that if $f\in \mathbb{C}[[x,y]]$ is irreducible, then
\[
    \operatorname{lct}_o(f) = \frac{1}{m} + \frac{1}{n},
\]
where $m = \operatorname{ord}_\mathfrak{m}(f)$, and $n/m$ is the first Puiseux exponent of $f$.     
This statement is proven in \cite{Igusa1977}. Originally, Igusa deals with different problems and computes invariants related to Bernstein-Sato polynomials, which turn out to be related to the LCT. Our approach uses valuation theory directly, instead of $D$-module theory.

In Section \ref{section: Valuative_Tree}, we build the terminology in the theorem, and the techniques for a part of the proof of the theorem. In particular, we review the valuation theory of the plane using the same approach as \cite{FavreJonsson2004}, but working in arbitrary characteristics. Another part of the proof relies on the study of the Newton polyhedron of the monomial ideal induced by $f$, which will be discussed in Section \ref{section: Newton_Polytope}. Finally, we prove our main theorem in Section \ref{section: Main_Theorem}.

Theorem~\ref{thm: Isom_between_Gamma_and_Vm} is a critical ingredient in our developments.
In \cite{FavreJonsson2004}, a proof that only works in characteristic $0$ is provided. We offer an alternative proof, which works in arbitrary characteristic, at the end of Subsection~\ref{subsection: The Universal Dual Graph}.


\section{Valuation Theory}
\label{section: Valuative_Tree}

In this section, we set up notations and terminology related to valuations; they are mostly from \cite{FavreJonsson2004}.

\subsection{Semivaluations}
A (nonnegative real) \emph{semivaluation} on a ring $A$ is a function $v\colon A\to [0,\infty]$ satisfying valuation hypotheses, but only requiring that $v(0) = \infty$ (that is, we allow $v(f) = \infty$ for nonzero $f\in A$).
We keep the terminology \emph{valuation} for those $v$ for which $v(f) = \infty$ implies $f=0$, and for classical valuations defined on a field.
A semivaluation $v$ is \emph{centered} if $v(f)>0$ for some nonzero $f\in A$; that is, the \emph{center} $c_A(v) = v^{-1}((0,\infty])$ is nonzero. For an ideal $I\subset A$, we write $v(I) = \inf\{v(f)\mid f\in A\}$.

On a Noetherian separated scheme $X$, a \emph{semivaluation} is a pair $(v,x)$, where $x\in X$ is a (non-necessarily closed) point, and $v$ is a valuation on the residue field $\kappa(x)$ which is \emph{centered} on $X$ in the following sense: for an open neighborhood $\operatorname{Spec}A$ of $x$, the valuation $v$ gives a centered semivaluation on $A$. This definition is independent of the choice of the neighborhood $\operatorname{Spec}A$, and we denote by $c_X(v)$ the point in $X$ corresponding to the prime ideal $c_A(v)$ of $A$. 

We describe some semivaluations on the ring $R \coloneq \Bbbk[[x_1, x_2,\dots,x_n]]$.
A monomial $x_1^{u_1}\cdots x_n^{u_n}$ in $R$ will be written as $x^\emph{u}$ for $\emph{u} = (u_1,\dots,u_n)\in\mathbb{Z}_{\geq 0}^n$.
For $\mathbf{w}\in \mathbb{R}_{\geq0}^n$, we define the \emph{monomial valuation} $v_{\mathbf{w}}$ on $R$ as
\[
v_{\mathbf{w}}\left(\sum_{\operatorname{u}}c_{\operatorname{u}} x^{\operatorname{u}}\right)
=\min\left\{ \mathbf{w}\cdot\operatorname{u} \,|\, c_{\operatorname{u}}\neq 0\right\}.
\]
Notice that $v_{\mathbf{w}}$ is centered if and only if $\mathbf{w}\in\mathbb{R}_{>0}^n$.

Let $\pi\colon Y\to X \coloneq\operatorname{Spec}R$ be a (proper) birational morphism from a normal variety $Y$, and let $E\subset Y$ be a prime divisor which gives the pullback on functions $\pi^\#\colon R\to \mathcal{O}_{Y, E}$. Then the composition
\[
\pi_*\operatorname{ord}_E =\operatorname{ord}_E \mathrel{\circ} \pi^{\#},
\]
followed by a scaling by a positive number $b_E$, defines a valuation, called a \emph{divisorial valuation}.
With slight abuse the notation, we write $\pi_*\operatorname{ord}_E$ simply as $\operatorname{ord}_E$.
Pick a closed point $y\in Y$ and a monomial valuation $v_{\mathbf{w}}$ with respect to some local system of parameters at $y$. Then the composition
\[
\pi_*v_{\mathbf{w}} = v_{\mathbf{w}}\circ\pi^{\#},
\]
defines a valuation in $\mathcal{V}$, named a \emph{quasimonomial valuation}. Note that every divisorial valuation is quasimonomial; a quasimonomial valuation which is not divisorial is called \emph{irrational}.

In the following example, let $n=2$ for simplicity. Let $f\in R$ be an irreducible element, which gives an analytically irreducible curve $\operatorname{V}(f)$ on $X$. The intersection multiplicity defined by
\[
v_f(g) = \dim_\Bbbk\frac{\Bbbk[[x_1,x_2]]}{(f,g)}
\]
is a semivaluation, called a \emph{curve semivaluation}. Note that $v_f(g) = \infty$ if and only if $g$ is a multiple of $f$, and thus if $v_f = v_g$ then $f=gu$ for some $u\in R^\times$. 

\subsection{Log Canonical Thresholds (LCTs)}
Let $Z$ be a closed subscheme of an integral scheme $X$ with defining ideal sheaf $\mathcal{I}$, and let $E$ be a divisor over $X$.
We define a number $\operatorname{ord}_E(Z)$ as follows: algebraically, we can do this locally by letting $X = \operatorname{Spec}A$ for some integral domain $A$ and $Z = \operatorname{Spec} A/\mathfrak{a}$ for some ideal $\mathfrak{a}\subset A$.
Write $K \coloneq \mathcal{K}(X) = \operatorname{Frac} A$. The valuation $\operatorname{ord}_E$ induces a DVR $V\subset K$ containing $A$, and we set $\operatorname{ord}_E(Z) = \operatorname{ord}_E(\mathfrak{a}V)$.
This gives the following geometric interpretation of $\operatorname{ord}_E(Z)$. There is a birational morphism $\pi\colon Y\to X$ with $Y$ normal and containing $E$.
Then, $\operatorname{ord}_E(Z)$ is the vanishing order of the ideal sheaf $\mathcal{I}\cdot\mathcal{O}_Y$ along $E$.

Now, assume that $X$ is \emph{$\mathbb{Q}$-Gorenstein}; that is, $X$ is normal, and if $K_X$ is a canonical divisor of $X$, then $mK_X$ is Cartier for some positive integer $m$.
For any birational morphism $\pi\colon Y\to X$ with $Y$ normal and containing a prime divisor $E$, the \emph{log discrepancy} of $E$ over $X$ is $A_X(E)\coloneq \operatorname{ord}_E(K_{Y/X}) + 1$, where $K_{Y/X}$ is the relative canonical divisor of $Y$ over $X$. This definition is independent of the model $Y$ where $E$ lives.

We further assume that $X$ is log terminal (i.e., the pair $(X,\emptyset)$ is log terminal), and let $W$ be a closed subscheme of $X$.
The \emph{log canonical threshold} (LCT) of the pair $(X,Z)$ along $W$ is 
\[
\operatorname{lct}_W(X,Z) \coloneq \inf \left\{ \frac{A_X(E)}{\operatorname{ord}_E(Z)}\,\middle|\, E\mbox{ is a divisor over } X \mbox{ such that } c_X(\operatorname{ord}_E) \in W \right\}.
\]
A \emph{log resolution} of the pair $(X,Z)$, where $X$ is $\mathbb{Q}$-Gorenstein, is a proper birational morphism $\pi\colon Y\to X$ with $Y$ regular, such that $\mathcal{I}\cdot\mathcal{O}_Y = \mathcal{O}_Y(-D)$ for some Cartier divisor $D$, and that $K_{Y/X}+D$ is simple normal crossings.
If $\pi\colon Y\to X$ is a log resolution of $(X, Z)$, then the infimum of the LCT can be taken among those $E\subset Y$ with $c_X(\operatorname{ord}_E)\in W$.
The LCT is independent of the log resolution chosen.

\subsection{Non-metric Trees}
\label{subsection: Non-metric_Trees}
    Let $\Lambda$ be a totally ordered set. A \emph{non-metric $\Lambda$-tree}\footnote{Our notion is called a \emph{rooted nonmetric $\Lambda$-tree} in \cite{FavreJonsson2004}, and by `forgetting the root' we get their version of nonmetric tree.} is a poset $(\mathcal{T},\leq)$ with the following properties.
    \begin{enumerate}[label={(T\arabic*)}]
        \item \label{def:Nonmetric_Tree:infimum} Every nonempty subset $\mathcal{S}$ of $\mathcal{T}$ admits an infimum $\wedge_{\tau\in \mathcal{S}}\tau$.
        The unique minimal element $\tau_0$ of $\mathcal{T}$ is called the \emph{root} of $\mathcal{T}$.
        \item \label{def:Nonmetric_Tree:left_ray} If $\tau \in\mathcal{T}$, the set $\{ \sigma \,|\, \sigma \leq \tau\}$ is isomorphic to a $\Lambda$-interval.
        \item \label{def:Nonmetric_Tree:not_long_line} Every convex subset of $\mathcal{T}$ is isomorphic to a $\Lambda$-interval. 
    \end{enumerate}
    We will use the term \emph{non-metric tree} to refer to a non-metric $\mathbb{R}$-tree, and denote $(T,\leq)$ simply by $\mathcal{T}$ if the order is understood. A non-metric $\Lambda$-tree is \emph{complete} if every totally ordered subset of $\mathcal{T}$ has an upper bound. Also, a \emph{non-rooted non-metric $\Lambda$-tree} is obtained from a non-metric $\Lambda$-tree by forgetting the root.

    A \emph{parameterization} on a non-metric $\Lambda$-tree $(\mathcal{T},\leq)$ is an increasing function $\alpha\colon \mathcal{T}\to \Lambda\cup\{\pm \infty\}$ such that it gives a bijection from any convex subset of $\mathcal{T}$ to an interval in $\Lambda$.
    In the case that $\Lambda=\mathbb{R}$, we also allow the codomain of $\alpha$ to be $[0,\infty]$ or $[0,1]$.
    A \emph{parametrized $\Lambda$-tree} is a non-metric $\Lambda$-tree with a parametrization.

    Let $\mathcal{T}$ be a non-metric $\Lambda$-tree. Elements $\tau,\tau'\in \mathcal{T}$ define a \emph{segment} by
        \[
        [\tau,\tau'] \coloneq \{ \sigma \mid \tau\wedge\tau' \leq \sigma \leq \tau \mbox{ or } \tau\wedge\tau' \leq \sigma \leq \tau'\}.
        \]
    We define an open-closed interval $(\tau,\tau']$ or an open interval $(\tau,\tau')$ in the same way without the corresponding endpoints.

    Given non-metric $\Lambda$-trees $\mathcal{S},\mathcal{T}$ with roots $\sigma_0,\tau_0$ respectively, a function $\Phi\colon \mathcal{S}\to \mathcal{T}$ is a \emph{morphism} of non-metric $\Lambda$-trees if for any $\sigma\in\mathcal{S}$, the map $\Phi$ is an order-preserving bijection on segments $[\sigma_0,\sigma]$ and $[\tau_0,\Phi(\sigma)]$.
    If $\Phi$ is also bijective, then it is an \emph{isomorphism} of non-metric $\Lambda$-trees.
    When $\mathcal{S},\mathcal{T}$ are parametrized trees with parametrizations $\alpha,\beta$ respectively, $\Phi$ is an \emph{isomorphism} of parametrized trees if it is an isomorphism of non-metric trees and $\alpha = \beta\circ\Phi$.
    

    Let $\tau$ be an element of a non-metric $\Lambda$-tree $\mathcal{T}$. Define a relation on $\mathcal{T}\setminus \{\tau\}$ by declaring $\sigma$ and $\sigma'$ to be equivalent if the segments $(\tau,\sigma]$ and $(\tau,\sigma']$ intersect. This is an equivalence relation, and an equivalence class $[\sigma]$ is called a \emph{tangent vector} at $\tau$. The collection of all equivalence classes is called the \emph{tangent space} at $\tau$, denoted by $T_\tau$. If $\sigma\in\mathcal{T}$ satisfies $[\sigma]=\mathbf{v}\in T_\tau$, we say that \emph{$\sigma$ represents $\mathbf{v}$}.
    
    Let $(\mathcal{T},\leq)$ be a non-metric tree. Given a point $\tau\in\mathcal{T}$ and a tangent vector $\mathbf{v}\in T_\tau$, define
    \[
    U_\tau(\mathbf{v}) = \{ \sigma\in\mathcal{T}\setminus\{\tau\} \mid \sigma \mbox{ represents }\mathbf{v} \}.
    \]
    The \emph{observer’s topology}\footnote{The terminology observer's topology is from the paper \cite{Coulbois2007}. It is called \emph{the weak tree topology} in \cite{FavreJonsson2004}.} is the topology generated by the subbase given by the collection of all possible $U_\tau(\mathbf{v})$.

\begin{prop}[{\cite[Proposition 3.8]{FavreJonsson2004}}]
\label{prop:parametrication_is_w.l.semicont}
    Any parametrization $\alpha$ on a non-metric tree $(\mathcal{T},\leq)$ is lower semi-continuous for the observer's topology.
\end{prop}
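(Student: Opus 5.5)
To prove the statement, I would check lower semicontinuity directly: for every $t\in\Lambda$ (here $\mathbb{R}$, or the extended codomain), the superlevel set $\{\sigma\in\mathcal{T}\mid \alpha(\sigma)>t\}$ must be open in the observer's topology. So I fix $\sigma$ with $\alpha(\sigma)>t$ and look for a neighborhood of $\sigma$ on which $\alpha>t$. The idea is to use the segment $[\tau_0,\sigma]$. By \ref{def:Nonmetric_Tree:left_ray} and the parametrization property, $\alpha$ restricted to this segment is an increasing bijection onto an interval of $\Lambda$.

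\textbf{Case $\alpha(\tau_0)>t$.} Since $\alpha$ is increasing and $\tau_0$ is the minimum of $\mathcal{T}$, we get $\alpha>t$ everywhere. The set is then all of $\mathcal{T}$, which is open.

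\textbf{Case $\alpha(\tau_0)\le t<\alpha(\sigma)$.} Because $\alpha([\tau_0,\sigma])$ is an interval containing $\alpha(\tau_0)$ and $\alpha(\sigma)$, there is some $\tau\in[\tau_0,\sigma)$ with $t\le\alpha(\tau)<\alpha(\sigma)$; in particular $\tau<\sigma$. Let $\mathbf{v}=[\sigma]\in T_\tau$ be the tangent vector at $\tau$ represented by $\sigma$. Then $U_\tau(\mathbf{v})$ is an open set containing $\sigma$.

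The key step is to show that every $\sigma'\in U_\tau(\mathbf{v})$ satisfies $\sigma'>\tau$. Suppose $\sigma'$ represents $\mathbf{v}$, so $(\tau,\sigma']\cap(\tau,\sigma]\neq\emptyset$. Since $\tau<\sigma$, the segment $(\tau,\sigma]$ consists of elements strictly above $\tau$. Take a common point $\rho$. I claim $\tau<\sigma'$.
\begin{enumerate}
\item If instead $\tau\not\le\sigma'$, then $\tau\wedge\sigma'<\tau$. The segment $[\tau,\sigma']$ is then the union of the chain from $\tau\wedge\sigma'$ up to $\tau$ and the chain from $\tau\wedge\sigma'$ up to $\sigma'$.
\item The point $\rho$ lies in this segment and satisfies $\rho>\tau$, so it cannot lie on the first chain. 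Hence $\tau\wedge\sigma'\le\rho\le\sigma'$.
\item But then $\tau<\rho\le\sigma'$, so $\tau\le\sigma'$, a contradiction.
\end{enumerate}
So $\tau\le\sigma'$, and $\sigma'\neq\tau$ by definition of $U_\tau(\mathbf{v})$, giving $\tau<\sigma'$.

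To turn $\sigma'>\tau$ into $\alpha(\sigma')>\alpha(\tau)$, I use injectivity: $\alpha$ is a bijection on the convex segment $[\tau_0,\sigma']$, so it is strictly increasing there. Hence $\alpha(\sigma')>\alpha(\tau)\ge t$. Therefore $U_\tau(\mathbf{v})$ lies in the superlevel set, and the set is open.

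The main obstacle I expect is the bookkeeping with the tree axioms in the key step: showing that representing the tangent vector $[\sigma]$ at a point $\tau<\sigma$ forces $\sigma'>\tau$. This relies on the description of segments via $\tau\wedge\sigma'$ and on $\{\rho\le\sigma'\}$ being totally ordered, which is \ref{def:Nonmetric_Tree:left_ray}. A minor additional point is the case $\alpha(\sigma)=+\infty$ when the codomain is extended. It is handled the same way, since the image of the segment is still an interval and we can pick $\tau$ with finite $\alpha(\tau)\ge t$.
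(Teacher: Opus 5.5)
Your proof is correct and is the standard argument. You take $\tau\in[\tau_0,\sigma)$ with $\alpha(\tau)\ge t$, show that $U_\tau([\sigma])$ contains only points strictly above $\tau$, and conclude from strict monotonicity of $\alpha$ along $[\tau_0,\sigma']$. The paper gives no proof of its own here; it only cites \cite[Proposition~3.8]{FavreJonsson2004}, whose argument is essentially this one, so I am comparing your route to that reference rather than to a proof in the paper.
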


\subsection{The Valuative Tree \texorpdfstring{$\mathcal{V}$}{V}}
\label{subsection: The_Valuative_Tree_V}
\begin{notation}
    Throughout the rest of this paper, we fix the notations $R \coloneq \Bbbk[[x,y]]$ and $\mathfrak{m} \coloneq (x,y)$.
\end{notation}
We denote by $\mathcal{V}$ the collection of all centered semivaluations $v$ on $R$ such that $v(\mathfrak{m}) \coloneq \min\{v(x),v(y)\} = 1$. For $v,w\in\mathcal{V}$, we define $v\leq w$ if $v(f)\leq w(f)$ for all $f\in R$, which makes $(\mathcal{V,\leq})$ a poset. One can show that $(\mathcal{V},\leq)$ is a non-metric tree rooted at $\operatorname{ord}_\mathfrak{m} = v_{(1,1)}$ (see \cite[Theorem~3.14]{FavreJonsson2004} for a proof). In particular, any two elements $v,w\in\mathcal{V}$ admit a greatest lower bound $v\wedge w$.

Since $\mathcal{V}$ is a non-metric tree, it is endowed with the observer's topology, which coincides with the \emph{weak topology} on $\mathcal{V}$, the weakest topology making all evaluation maps $v\mapsto v(f)$ continuous for $f\in R$.

Note that the semivaluations $v$ given in the examples of Subsection~\ref{subsection: Non-metric_Trees} yield normalized semivaluations $v(\mathfrak{m})^{-1}v\in\mathcal{V}$.
However, these do not exhaust the whole space $\mathcal{V}$; there remain other valuations, called \emph{infinitely singular valuations}.
Since scaling a semivaluation by a nonnegative real number produces an equivalent one, the type of a semivaluation $c\cdot v$, where $c>0$, is by definition the same as the type of $v$.

\subsection{Sequences of Key Polynomials (SKPs)}
To analyze the valuative tree $\mathcal{V}$, we review the notion of a sequence of key polynomials, which Mac Lane initially developed in \cite{MacLane1936}. We follow and borrow the symbols from \cite{FavreJonsson2004}.
Note that \cite{FavreJonsson2004} only deals with the case $\Bbbk = \mathbb{C}$, but the arguments in that book work for any algebraically closed field.

    Let $k\in\overline{\mathbb{Z}}_{>0}$. A \emph{sequence of key polynomials} (SKP) of $R$ is a sequence of polynomials $(U_j)_{j=0}^k$ and a sequence $(\tilde\beta_j)_{j=0}^k$ of numbers in $(0,\infty]$, not all infinity, that satisfy the following conditions:
    \begin{enumerate}[label={(S\arabic*)}]
        \item $U_0=x$ and $U_1 = y$.
        \item \label{(S2):SKP_relations} For $1\leq j< k$, there are relations
        \[
        \tilde\beta_{j+1} > n_j\tilde\beta_{j} = \sum_{l=0}^{j-1} m_{j,l}\tilde\beta_l,
        \]
        where $n_j\in\mathbb{Z}_{>0}$ and $ m_{j,l}\in\mathbb{Z}_{\geq 0}$ satisfy the conditions: for all $1\leq j < k$ and $0 < l< j$,
        \[
        n_j = \min\left\{ n\in\mathbb{Z}_{>0}\,\middle|\, n\tilde\beta_j\in \mathbb{Z}\tilde\beta_1+\cdots+ \mathbb{Z}\tilde\beta_{j-1}\right\}
        \]
        and $0\leq m_{j,l}<n_j$.
        \item \label{(S3):Next_SKP} For $j\geq 1$, there is $\theta_j\in \Bbbk^*$ such that
        \[
        U_{j+1} = U_j^{n_j} -\theta_jU_0^{m_{j,0}}U_1^{m_{j,1}}\cdots U_{j-1}^{m_{j,j-1}}.
        \]
    \end{enumerate}
    An SKP can be denoted by a pair $[(U_j)_0^k;(\tilde\beta_j)_0^k]$. If $k<\infty$, we say that the SKP is \emph{finite}, and \emph{infinite} otherwise.

The following lemma will be used in the last section.
\begin{lemma}[First Properties of SKPs]
    \label{lemma: First_Prop_of_SKP}
    Let $[(U_j)_0^k;(\tilde\beta_j)_0^k]$ be an SKP.
    \begin{itemize}
        \item All $U_j$ are irreducible in $R$ and in Weierstrass form with respect to $y$.
        \item The sequence $(\tilde\beta_j/\deg_y U_j)_0^k$ is strictly increasing.\footnote{We may define $\deg_y U_0 = \deg_y x = -\infty$, so $\tilde\beta_0/\deg_y U_0 = 0$.}
    \end{itemize}
\end{lemma}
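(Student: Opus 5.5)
Both claims will be proved together by induction on $j$, with the inductive hypothesis that $U_j$ is a monic polynomial in $y$ over $\Bbbk[[x]]$ of degree $d_j \coloneq \deg_y U_j = n_1 n_2\cdots n_{j-1}$ (empty product $1$ for $j=1$). This is the Weierstrass form: monic in $y$ and $U_j(0,y)=y^{d_j}$.

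\textbf{Step 1: degree and Weierstrass form.} The cases $U_0=x$ (degree $-\infty$ in $y$ by the paper's convention) and $U_1=y$ are immediate. For the inductive step, use \ref{(S3):Next_SKP}: $U_{j+1}=U_j^{n_j}-\theta_j\prod_{l<j}U_l^{m_{j,l}}$. The first term is monic of $y$-degree $n_jd_j$. Since $0\le m_{j,l}<n_l$, the second term has $y$-degree
\[
\sum_{l=1}^{j-1} m_{j,l}\,d_l \le \sum_{l=1}^{j-1}(n_l-1)\,n_1\cdots n_{l-1} = d_j-1 < n_jd_j .
\]
The middle expression telescopes, so $U_{j+1}$ is monic of degree $d_{j+1}$. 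Setting $x=0$, every $U_l$ with $l\ge1$ becomes $y^{d_l}$ by induction, so $U_{j+1}(0,y)=y^{d_{j+1}}-\theta_j\,[m_{j,0}=0]\,y^{e}$ with $e<d_{j+1}$. To get $U_{j+1}(0,y)=y^{d_{j+1}}$ we need $m_{j,0}>0$. I expect to argue this from the minimality of $n_j$: if $m_{j,0}=0$, then $n_j\tilde\beta_j$ would lie in the group generated by $\tilde\beta_1,\dots,\tilde\beta_{j-1}$ alone. That group contains $n_{j-1}\tilde\beta_{j-1}$, and one checks that it is too small. Alternatively, I would fall back on the irreducibility argument in Step 3, which gives Weierstrass form directly: an irreducible monic polynomial in $\mathfrak m$ satisfies $U(0,y)=y^d$. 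So the order will be monic-ness first, then irreducibility, with Weierstrass form deduced from irreducibility. This also requires $U_{j+1}\in\mathfrak m$, which holds because $\tilde\beta_{j+1}>0$ will be its value under the monomialized valuation below.

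\textbf{Step 2: monotonicity.} Consider the valuation $\nu_j$ on $R$ determined by the SKP $[(U_l)_0^j;(\tilde\beta_l)_0^j]$, as in \cite{FavreJonsson2004}. It is defined via $U_j$-adic expansions, with $\nu_j(U_l)=\tilde\beta_l$ for $l\le j$. Write $\beta_l=\tilde\beta_l/d_l$. The relation $n_j\tilde\beta_j=\sum_l m_{j,l}\tilde\beta_l$ together with the degree bound shows that $n_j\tilde\beta_j$ is attained by the monomial $\prod_{l<j} U_l^{m_{j,l}}$, whose "degree-weighted slope" is at most $\beta_{j-1}$ … Concretely, I would prove the equivalent inequality $\tilde\beta_{j+1}/d_{j+1}>\tilde\beta_j/d_j$ as follows. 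By \ref{(S2):SKP_relations}, $\tilde\beta_{j+1}>n_j\tilde\beta_j$, and $d_{j+1}=n_jd_j$. Dividing gives $\beta_{j+1}>\beta_j$ at once. For $j=0$, we have $\beta_0=0<\beta_1=\tilde\beta_1$. So monotonicity is essentially formal from (S2) once Step 1 establishes $d_{j+1}=n_jd_j$.

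\textbf{Step 3: irreducibility (main obstacle).} I would attack this with the Newton-polygon/valuation approach in \cite{FavreJonsson2004}, which is characteristic-free. Consider the quasimonomial valuation $\nu$ defined by the SKP up to $U_j$, with value $\tilde\beta_j$ on $U_j$ chosen generic. Take the graded piece generated by $U_j$ over the residue algebra. By the minimality of $n_j$, that algebra is $\Bbbk[\zeta]$ with $\zeta$ the initial form of $U_j^{n_j}/\prod U_l^{m_{j,l}}$. The initial form of $U_{j+1}$ is then $\zeta-\theta_j$, which is irreducible of $\zeta$-degree $1$. Now suppose $U_{j+1}=gh$ with $g,h$ nonunits, which by Weierstrass preparation we may take monic. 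Then the initial forms multiply, so one factor, say $g$, has initial form of $\zeta$-degree $0$. Using $\deg_y$ and the bound $\nu(g)/\deg_y g\le\beta_j$ for polynomials whose $U_j$-expansion has $U_j$-degree $<n_j$, this forces $\deg_y g$ to be a multiple of $d_{j+1}$, i.e.\ $h$ is a unit. The delicate point is showing that valuations are additive on initial forms without invoking Puiseux series. I would rely on the explicit computation of $\nu_j$ on $U_j$-adic expansions via the minimal-value-term formula, which holds in any characteristic.
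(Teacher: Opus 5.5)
The paper quotes this lemma from Favre--Jonsson without proof, so your argument has to stand on its own. Your degree count is correct; it rightly uses $0\le m_{j,l}<n_l$, which is the intended form of (S2). Step~2 is also correct, and it settles the second bullet.

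\textbf{Weierstrass form.} This is left open. You need $m_{j,0}\ge 1$, and the minimality of $n_j$ does not give it: the group generated by $\tilde\beta_1,\dots,\tilde\beta_{j-1}$ can perfectly well contain $n_j\tilde\beta_j$. What works is a size estimate. For $j\ge 2$, (S2) gives $(n_l-1)\tilde\beta_l<\tilde\beta_{l+1}-\tilde\beta_l$, so
\[
\sum_{l=1}^{j-1}m_{j,l}\tilde\beta_l\le\sum_{l=1}^{j-1}(n_l-1)\tilde\beta_l<\tilde\beta_j-\tilde\beta_1<n_j\tilde\beta_j ,
\]
which forces $m_{j,0}\tilde\beta_0>0$. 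For $j=1$ it is just $m_{1,0}\tilde\beta_0=n_1\tilde\beta_1>0$.

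Your fallback is false as stated. The polynomial $y^2-y$ is monic, lies in $\mathfrak m$, and is irreducible in $R$ because $y-1$ is a unit, yet it is not Weierstrass. The implication holds only for irreducibility in $\Bbbk[[x]][y]$, via Hensel. Moreover, Step~3 splits $U_{j+1}$ into Weierstrass factors whose degrees add up to $d_{j+1}$, which already presupposes that $U_{j+1}$ is Weierstrass.

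\textbf{Irreducibility.} The main gap is the last step of Step~3. The bound $\nu_j(g)\le\beta_j\deg_y g$ cannot force $\deg_y g\in d_{j+1}\mathbb Z$. For example, $g=U_j^i$ with $0<i<n_j$ has $\zeta$-degree $0$ and attains equality, yet $\deg_y g=id_j$. The bookkeeping is also reversed. A factor of $\zeta$-degree $0$ is unconstrained; what must be shown is that the factor carrying $\zeta-\theta_j$ has $y$-degree at least $d_{j+1}$.

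That is the missing idea, namely the $\nu_j$-minimality of $U_{j+1}$. Suppose $\deg_y h<d_{j+1}$ and write $h=\sum_{i<n_j}h_iU_j^i$. Then the minimum of $\nu_{j-1}(h_i)+i\tilde\beta_j$ is attained exactly once. The reason is that $\nu_{j-1}(h_i)\in\mathbb Z\tilde\beta_0+\dots+\mathbb Z\tilde\beta_{j-1}$, while the numbers $i\tilde\beta_j$, $0\le i<n_j$, are pairwise incongruent modulo that group. This is exactly where the minimality of $n_j$ is used. You also need $\zeta$ to be transcendental over $\Bbbk$ for ``$\zeta$-degree'' to be well defined. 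By contrast, multiplicativity of initial forms, which you flag as delicate, is automatic once $\nu_j$ is a valuation.

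\textbf{A shorter repair.} This uses only the Favre--Jonsson valuations you already invoke.
\begin{itemize}
\item Fix $t>n_j\tilde\beta_j$, and let $\mu$ be the valuation of the SKP $[(U_l)_0^{j+1};(\tilde\beta_0,\dots,\tilde\beta_j,t)]$.
\item The expansion formula gives $\mu=\nu_j$ on elements of $y$-degree $<d_{j+1}$; for power-series coefficients, approximate by polynomials. It also gives $\mu(U_{j+1})=t$.
\item On the other hand, $\nu_j(U_{j+1})=\min\{n_j\tilde\beta_j,\nu_{j-1}(M)\}=n_j\tilde\beta_j$, where $M=\prod_{l<j}U_l^{m_{j,l}}$ has $y$-degree $<d_j$ by your Step~1.
\end{itemize}
Now suppose $U_{j+1}=gh$ with $g,h$ Weierstrass polynomials of positive degree. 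Then $t=\mu(g)+\mu(h)=\nu_j(g)+\nu_j(h)=n_j\tilde\beta_j$, a contradiction.
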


An SKP $[(U_j)_0^k;(\tilde\beta_j)_0^k]$, where $1\leq k \leq \infty$, determines a smallest centered semivaluation $v=\operatorname{val}[(U_j)_0^k;(\tilde\beta_j)_0^k]$ such that $v(U_j) = \tilde\beta_j$ for all $0\leq j \leq k$; conversely, any centered semivaluations induces an SKP.

Let us sketch the idea of how an SKP determines a semivaluation by induction on the length $k$ of an SKP. We refer to \cite{FavreJonsson2004} for a complete proof. Assume $k<\infty$ for a moment. When $k=1$, define the monomial valuation $v_1$ with the weight $(\tilde\beta_0,\tilde\beta_1)$. Assume that $v_1,\dots,v_{k-1}$ are defined from $[(U_j)_0^{k-1};(\tilde\beta_j)_0^{k-1}]$. Note that for $f\in \Bbbk[x,y]$, we can write
\[
f = \sum_i g_iU_{k}^i,
\]
where $g_i\in \Bbbk[x,y]$ with $\deg_y g_i<\deg_yU_{k}$. Define
\[
v_{k}(f) = \min_i \left\{ v_{k-1}(g_i) + i\tilde\beta_{k} \right\}.
\]
Then, $v_k$ is a semivaluation on $\Bbbk[x,y]$ (see \cite[Section 2.1.3]{FavreJonsson2004}), and it extends uniquely to a semivaluation on $R$ (see, for example, \cite[Proposition~2.10]{FavreJonsson2004}). We then let $v = v_k$.
Finally, the case of $k=\infty$ follows from the following two facts (see \cite[Theorem~2.22]{FavreJonsson2004}).
\begin{itemize}
    \item $v_k$ converges (weakly) to a semivaluation $v_\infty$ as $k\to\infty$.
    \item If $n_j = 1$ for all $k$ large enough, then $U_k$ converges to an irreducible formal power series $U_\infty\in R$, and $v_\infty = v_{U_\infty}$.
\end{itemize}

One can completely classify centered semivaluations on $R$ via SKPs (see \cite[Definition~2.23]{FavreJonsson2004}).

\subsection{Parametrizations on \texorpdfstring{$\mathcal{V}$}{V}}
\label{subsection: Parametrizations_on_V}
We see from Subsection~\ref{subsection: The_Valuative_Tree_V} that $\mathcal{V}$ is a non-metric tree; here, we review some natural parametrizations on $\mathcal{V}$ by first looking at some numerical invariants.
    Given a semivaluation $v\in\mathcal{V}$, its \emph{skewness} is
    \[
    \alpha(v) \coloneq \sup \left\{ \frac{v(f)}{\operatorname{ord}_\mathfrak{m}(f)} \,\middle|\, f\in \mathfrak{m} \right\},
    \]
    and its \emph{multiplicity}\footnote{In \cite{FavreJonsson2004}, this definition only applies for quasimonomial valuations, but it can be extended to any semivaluation.} is
    \(
    m(v) \coloneq \operatorname{min}\left\{  \operatorname{ord}_\mathfrak{m}(f) \mid v_f\geq v \right\}
    \).

\begin{prop}[{cf. \cite[Proposition 3.25]{FavreJonsson2004}}]
\label{prop:Using_Skewness_to_Complute_Semivaluation}
    Given $v\in\mathcal{V}$ and any irreducible $f\in\mathfrak{m}$, we have
    \[
    v(f) = \alpha(v\wedge v_f) \operatorname{ord}_\mathfrak{m}(f).
    \]
    This immediately implies that $\alpha(v) = v(f)/ \operatorname{ord}_\mathfrak{m}(f)$ if and only if $v_f \geq v$.
\end{prop}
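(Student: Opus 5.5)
The plan is to prove the formula first for curve semivaluations, where it becomes an intersection-multiplicity statement. I will then extend it to all of $\mathcal{V}$ by a sandwich argument that uses only the tree structure of $\mathcal{V}$ and the monotonicity of evaluation maps. Throughout, I normalize curve semivaluations as $\nu_g := v_g/\operatorname{ord}_\mathfrak{m}(g)\in\mathcal{V}$ for irreducible $g\in\mathfrak{m}$. With this normalization the claim reads $\nu_g(f)=\alpha(\nu_g\wedge\nu_f)\operatorname{ord}_\mathfrak{m}(f)$, i.e.
\[
\dim_\Bbbk R/(f,g)=\alpha(\nu_f\wedge\nu_g)\,\operatorname{ord}_\mathfrak{m}(f)\operatorname{ord}_\mathfrak{m}(g).
\]
The left-hand side is symmetric in $f$ and $g$, which is a first consistency check.

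\textbf{Step 1 (curves).} For $f\neq g$ (up to units), I would prove the displayed identity by blowing up. Noether's formula gives $\dim_\Bbbk R/(f,g)=\sum_p m_p(f)m_p(g)$, where the sum runs over the common infinitely near points of the two branches; it holds in any characteristic since it only uses strict transforms under point blow-ups. On the valuative side, I would compute $\alpha(\nu_f\wedge\nu_g)$ using the SKPs of $f$ and $g$. The two SKPs agree up to some index $j$ and then separate, so $\nu_f\wedge\nu_g=\operatorname{val}[(U_i)_0^j;(\tilde\beta_i)_0^j]$ with an explicit last value. The skewness of such a finite-SKP valuation is $\tilde\beta_j/\deg_y U_j$ after normalization, which is consistent with the monotonicity in Lemma~\ref{lemma: First_Prop_of_SKP}. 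Matching the two sides means translating the common SKP data into the multiplicity sequence of the common infinitely near points, i.e.\ the classical relation between characteristic exponents and multiplicities. This has to be redone without Puiseux series. I expect this to be the main obstacle. What I would try first is induction on the number of blow-ups: check that both sides transform the same way under one blow-up, namely that each loses $\operatorname{ord}(f)\operatorname{ord}(g)$ after the corresponding rescaling of skewness, with the base case being transversal branches, where $\alpha(\operatorname{ord}_\mathfrak{m})=1$.

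\textbf{Step 2 (points below $\nu_f$).} Let $w\le\nu_f$ with $w\ne\nu_f$. I pick an irreducible $g$ whose curve semivaluation $\nu_g$ lies in a tangent direction at $w$ different from the direction of $\nu_f$, so that $\nu_g\wedge\nu_f=w$. Such a $g$ exists because every tangent direction at a point of $\mathcal{V}$ contains curve semivaluations, which can be seen by extending the truncated SKP of $w$ with a different $\theta_j$. Then $w\le\nu_g$ gives $w(f)\le\nu_g(f)=\alpha(w)\operatorname{ord}_\mathfrak{m}(f)$ by Step 1. The reverse inequality $w(f)\ge\alpha(w)\operatorname{ord}(f)$ follows from $w(f)\ge w'(f)$ for $w'\le w$ together with a limiting argument from above along $[w,\nu_f]$. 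More directly, $w(f)\ge \nu_h(f)\cdot(\ldots)$ is not needed, because the SKP description shows that $w(f)=\tilde\beta\cdot\operatorname{ord}(f)/\deg_yU_j$ when $f$ continues $w$'s SKP. The endpoint $w=\nu_f$ is trivial, since both sides are $\infty$.

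\textbf{Step 3 (general $v$).} Set $w=v\wedge\nu_f$. If $v\le\nu_f$, Step 2 applies. Otherwise, choose a curve semivaluation $\nu_g\ge v$ with $\nu_g\wedge\nu_f=w$, which is possible by taking $g$ in a tangent direction beyond $v$. Monotonicity then gives
\[
w(f)\le v(f)\le\nu_g(f),
\]
and both outer terms equal $\alpha(w)\operatorname{ord}_\mathfrak{m}(f)$ by Steps 1 and 2. This proves $v(f)=\alpha(v\wedge\nu_f)\operatorname{ord}_\mathfrak{m}(f)$.

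\textbf{The consequence.} The formula shows that $\alpha(v)=v(f)/\operatorname{ord}_\mathfrak{m}(f)$ if and only if $\alpha(v\wedge\nu_f)=\alpha(v)$. Since $\alpha$ is a parametrization, it is strictly increasing on the segment $[v\wedge\nu_f,v]$, so this holds if and only if $v\wedge\nu_f=v$, that is, $\nu_f\ge v$.
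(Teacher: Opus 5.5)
The paper gives no proof of its own here; it quotes Favre--Jonsson, so your argument has to stand on its own, and as written it does not. Your Step 3 reduction is sound: if $v\not\le\nu_f$, every curve semivaluation $\nu_g\ge v$ automatically has $\nu_g\wedge\nu_f=v\wedge\nu_f$, so the sandwich is legitimate. The one exception is infinitely singular $v$, which no curve semivaluation dominates; there you must add $v(f)=\lim_k v_k(f)$ along the SKP truncations. But the whole content of the proposition sits in Steps 1 and 2, and neither is proved.

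Step 1 is the proposition itself for $v=\nu_g$, and you leave it as ``the main obstacle''. The blow-up induction you suggest does not work as described, because skewness has no ``corresponding rescaling'' under a blow-up. If $f,g$ share a tangent and $p$ is the common point of their strict transforms $f',g'$, Noether's formula reduces the inductive step to
\[
\bigl(\alpha(\nu_f\wedge\nu_g)-1\bigr)\operatorname{ord}_\mathfrak{m}(f)\operatorname{ord}_\mathfrak{m}(g)=\alpha_p(\nu_{f'}\wedge\nu_{g'})\operatorname{ord}_p(f')\operatorname{ord}_p(g'),
\]
where $\alpha_p$ is the skewness at $p$. Here $\alpha-1$ and $\alpha_p$ are not related by any fixed factor:
for $f=y^2-x^3$, $g=y^2-\theta x^3$ with $\theta\neq 0,1$, one gets $\alpha=3/2$ and $\alpha_p=2$;
for $f=y-x^2$, $g=y-\theta x^2$, with the same $p$, one gets $\alpha=2$ and $\alpha_p=1$.
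Proving the display means comparing the part of $\mathcal{V}$ in the direction of $p$ with the valuative tree at $p$, with multiplicities entering the comparison. That is as hard as the proposition. You also cannot borrow it from the paper's identification $A^{\textup{Farey}}=A^{\textup{thin}}\circ\Phi$, because that proof invokes this very proposition.

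Step 2 has a more basic defect. The inequality you extract from Step 1, $w(f)\le\nu_g(f)=\alpha(w)\operatorname{ord}_\mathfrak{m}(f)$, holds for every $w$ directly from the definition of $\alpha$ as a supremum, so Step 1 contributes nothing there. What Step 3 actually needs is the reverse inequality $w(f)\ge\alpha(w)\operatorname{ord}_\mathfrak{m}(f)$ for $w\le\nu_f$, i.e.\ that $f$ attains the supremum defining $\alpha(w)$. This is never proved. Your ``limiting argument'' presupposes the identity at nearby points of the segment together with continuity of $\alpha$ along it, which is circular. The SKP remark only computes $w(f)$, and even that is asserted rather than derived from the $U_j$-adic expansion of $f$. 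Turning it into the lower bound requires $\alpha(w)\le\tilde\beta/\deg_yU_j$, which is the nontrivial half of the SKP skewness formula and of the same depth as what you are proving.

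Moreover, your choice of $g$ with $\nu_g\wedge\nu_f=w$ is impossible when $w$ is irrational. The tangent space at such $w$ has exactly two vectors, one toward $\operatorname{ord}_\mathfrak{m}$ and one toward $\nu_f$. Its SKP also cannot be extended: the last value has no positive multiple in the group generated by the earlier ones, so there is no next key polynomial and no $\theta_j$ to vary. Hence every curve semivaluation $\nu_g\ge w$ satisfies $\nu_g\wedge\nu_f>w$.

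What is missing is a genuine computation along $[\operatorname{ord}_\mathfrak{m},\nu_f]$, most naturally via the SKP of $f$, showing $w(\phi)/\operatorname{ord}_\mathfrak{m}(\phi)\le w(f)/\operatorname{ord}_\mathfrak{m}(f)$ for all $\phi\in\mathfrak{m}$. Together with an independent proof of Step 1, or of the fact that $v(f)$ depends only on $v\wedge\nu_f$, this would complete your sandwich.
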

SKPs give explicit formulas for the skewness and the multiplicity.

\begin{prop}
\label{prop: SKP_Computes_Skewness_and_Multiplicity}
If $v=\operatorname{val}[(U_j)_0^k;(\tilde\beta_j)_0^k]$, then we can read the skewness directly (see \cite[Lemma~3.32]{FavreJonsson2004}):
\begin{itemize}
    \item If $v$ is quasimonomial, then $\alpha(v) = \tilde\beta_0\tilde\beta_k/\deg_yU_j$.
    \item If $v$ is a curve semivaluation, then $\alpha(v) = \infty$.
    \item If $v$ is infinitely singular, then $\alpha(v) = \lim_{j\to\infty}\tilde\beta_0\tilde\beta_j/\deg_yU_j\in (1,\infty]$.
\end{itemize}

If we further assume that $1 = \tilde\beta_0 \leq \tilde\beta_1$, then $m(v) = \sup \left\{ \deg_y U_j \mid 1\leq j \leq k\right\}$. In particular, if $v$ is a quasimonomial valuation or a curve semivaluation, then $m(v) = \deg_y U_k$; $v$ is infinitely singular if and only if $m(v) = \infty$ (see \cite[Lemma~3.42 and Proposition~3.37]{FavreJonsson2004}).
\end{prop}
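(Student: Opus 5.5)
We establish the skewness formulas first, then the multiplicity formulas, arguing along the increasing approximating sequence $v_1\le v_2\le\cdots$ built from the SKP. Since $\tilde\beta_0=v(x)$ and $v(\mathfrak m)=1$, we may rescale so that $1=\tilde\beta_0\le\tilde\beta_1$ (after swapping the roles of $x,y$ if necessary). Then each $U_j$ is in Weierstrass form in $y$ with tangent cone a power of $y$, so $\operatorname{ord}_\mathfrak{m}(U_j)=\deg_y U_j$ for $j\ge1$.

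\emph{Quasimonomial case, $k<\infty$.} Take $f=U_k$ in the definition of $\alpha$. This gives $\alpha(v)\ge \tilde\beta_0\tilde\beta_k/\deg_y U_k$ (with the factor $\tilde\beta_0$ restoring the normalization). For the reverse inequality, the supremum in $\alpha(v)$ may be restricted to irreducible $f$. By Proposition~\ref{prop:Using_Skewness_to_Complute_Semivaluation} we have $v(f)/\operatorname{ord}_\mathfrak{m}(f)=\alpha(v\wedge v_f)$, so it suffices to show $\alpha(v)=v(U_k)/\operatorname{ord}_\mathfrak m(U_k)$. By the same proposition, this is equivalent to $v_{U_k}\ge v$.

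To prove $v_{U_k}\ge v$, I use that $v$ is the \emph{smallest} semivaluation with $v(U_j)=\tilde\beta_j$. It is enough to check that $v_{U_k}$, after rescaling, takes values $\ge \tilde\beta_j$ on $U_j$ for $j<k$ and the value $\infty$ on $U_k$. The first claim follows from the inductive definition of $v_k$: in the $U_j$-adic expansions one compares the monomials $\prod U_j^{a_j}$. The second claim is immediate. The strict monotonicity of $\tilde\beta_j/\deg_y U_j$ in Lemma~\ref{lemma: First_Prop_of_SKP} is what makes the last key polynomial dominate these monomials.

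\emph{Curve case.} Here $v=v_f$ for some irreducible $f$ (namely $U_k$, or the limit $U_\infty$). Then $v(f)/\operatorname{ord}_\mathfrak m(f)=\infty$, so $\alpha(v)=\infty$.

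\emph{Infinitely singular case.} Let $v_j=\operatorname{val}[(U_i)_0^j;(\tilde\beta_i)_0^j]$. The $v_j$ increase and converge weakly to $v$. Since $\alpha$ is increasing, $\alpha(v)\ge\lim_j\alpha(v_j)$. Since $\alpha$ is a parametrization, Proposition~\ref{prop:parametrication_is_w.l.semicont} makes it lower semicontinuous for the weak topology, which gives $\alpha(v)\le\liminf_j\alpha(v_j)$. Together with the quasimonomial case this yields $\alpha(v)=\lim_j\tilde\beta_0\tilde\beta_j/\deg_yU_j$. The limit lies in $(1,\infty]$ because the sequence is strictly increasing and starts at $\tilde\beta_1/1\ge1$.

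\emph{Multiplicity.} For $k<\infty$, the argument above shows $v_{U_k}\ge v$, hence $m(v)\le\operatorname{ord}_\mathfrak m(U_k)=\deg_yU_k$. For the reverse inequality, suppose $v_f\ge v$ with $f$ irreducible and Weierstrass in $y$ (which we may assume after a unit), and suppose $\deg_y f<\deg_y U_k$. Expanding $f=\sum g_iU_{k-1}^i$, the definition of $v_k$ forces $v(f)=v_{k-1}(f)$. On the other hand, $v_f\ge v$ gives $\alpha(v)=v(f)/\operatorname{ord}_\mathfrak m f$. By the strict inequality $\tilde\beta_k>n_{k-1}\tilde\beta_{k-1}$ and the quasimonomial formula applied to $v_{k-1}$, this ratio is at most $\alpha(v_{k-1})<\alpha(v)$, a contradiction. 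Hence $m(v)=\deg_yU_k$. For the infinite cases, apply this to each $v_j\le v$: since $v_f\ge v\ge v_j$, we get $\operatorname{ord}_\mathfrak m(f)\ge\deg_yU_j$, so $m(v)\ge\sup_j\deg_yU_j$, with equality when the degrees stabilize (the curve case). In particular, $m(v)=\infty$ exactly when the $n_j$ are not eventually $1$, which is the infinitely singular case.

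\emph{Main obstacle.} The hardest step is the comparison $v(f)=v_{k-1}(f)$ for $\deg_yf<\deg_yU_k$, together with the resulting bound on $v(f)/\operatorname{ord}_\mathfrak m f$. This requires controlling $\operatorname{ord}_\mathfrak m$ of the terms in the $U_j$-adic expansion. I would handle it via the monomial comparison $\sum a_j\tilde\beta_j\le(\tilde\beta_{k-1}/\deg U_{k-1})\sum a_j\deg U_j$.
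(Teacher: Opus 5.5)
The paper gives no proof of this proposition (it cites Favre--Jonsson), so your argument has to stand on its own. It has two genuine gaps.

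\textbf{First gap: the quasimonomial skewness formula.} Everything else rests on this case. You reduce $\alpha(v)\le\tilde\beta_k/d_k$ (with $d_k=\deg_yU_k$) to $\hat v_{U_k}\ge v$, where $\hat v_{U_k}=v_{U_k}/d_k$ is the normalized curve semivaluation. You then reduce this to $v_{U_k}(U_j)\ge d_k\tilde\beta_j$ for $j<k$, which you say ``follows from the inductive definition of $v_k$.'' It does not. That recursion computes $v$, whereas $v_{U_k}(U_j)=\dim_\Bbbk R/(U_k,U_j)$ is an intersection number. Relating the two is essentially the theorem that $\operatorname{val}[(U_j)_0^k;(\tilde\beta_0,\dots,\tilde\beta_{k-1},\infty)]$ is proportional to $v_{U_k}$. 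One route: its kernel is $(U_k)$, and $R/(U_k)$, being analytically irreducible, has only one centered valuation up to scaling. Nothing in the proposal supplies this.

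The easy fix is the estimate you mention only at the very end. Prove by induction on $k$ that $v_k(f)\le(\tilde\beta_k/d_k)\operatorname{ord}_{\mathfrak{m}}(f)$ for all $f$.
\begin{itemize}
\item For $k=1$, look at a lowest-degree monomial of $f$.
\item For the inductive step, write $f=\sum_i g_iU_k^i$ with $\deg_yg_i<d_k$. By the inductive hypothesis and Lemma~\ref{lemma: First_Prop_of_SKP}, $v_k(g_iU_k^i)=v_{k-1}(g_i)+i\tilde\beta_k\le(\tilde\beta_k/d_k)\operatorname{ord}_{\mathfrak{m}}(g_iU_k^i)$. Then use $\min_i\operatorname{ord}_{\mathfrak{m}}(g_iU_k^i)\le\operatorname{ord}_{\mathfrak{m}}(f)$, and pass to power series by truncation.
\end{itemize}
This gives $\alpha(v)=\tilde\beta_k/d_k$ directly. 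The inequality $\hat v_{U_k}\ge v$ is then a \emph{consequence}, via the ``if and only if'' in Proposition~\ref{prop:Using_Skewness_to_Complute_Semivaluation}. Once this is in place, your treatment of the curve case and of the infinitely singular case (monotonicity plus lower semicontinuity) is fine.

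\textbf{Second gap: the lower bound $m(v)\ge d_k$.} Your contradiction shows only that $v_f\ge v$ forces $\deg_yf\ge d_k$. But $m(v)$ is defined through $\operatorname{ord}_{\mathfrak{m}}f$, and for a Weierstrass polynomial $\operatorname{ord}_{\mathfrak{m}}f\le\deg_yf$ (think of $y^3-x$). So the inequality points the wrong way. The same conflation recurs when you write $\operatorname{ord}_{\mathfrak{m}}(f)\ge\deg_yU_j$ in the infinite cases.

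What is missing is that, for $k\ge2$, the condition $v_f\ge v$ rules out $V(f)$ being tangent to $\{x=0\}$. Otherwise the normalized $v_f$ takes the value $1$ on $y$. This forces $\tilde\beta_1=1$ and hence $U_2=y-\theta_1x$. The normalized $v_f$ then also takes the value $1$ on $U_2$, contradicting $\tilde\beta_2>1$. Hence $\operatorname{ord}_{\mathfrak{m}}f=\dim_\Bbbk R/(f,x)=\deg_yf$, and your argument goes through.

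The step you single out as the main obstacle is not one. The equality $v(f)=v_{k-1}(f)$ is immediate when $\deg_yf<d_k$, and $v_{k-1}(f)/\operatorname{ord}_{\mathfrak{m}}f\le\alpha(v_{k-1})$ is just the definition of skewness.

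\textbf{A smaller point.} Swapping $x$ and $y$ does not turn an SKP into an SKP, since key polynomials must be Weierstrass in $y$ with $0\le m_{j,l}<n_j$. So the skewness formula for $\tilde\beta_0>\tilde\beta_1$ needs its own argument. Already for $k=1$ the supremum is attained at $x$ rather than at $U_1=y$.
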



    
    
        For $v\in\mathcal{V}$ with $m(v)<\infty$, there exists a finite sequence of divisorial valuations $v_1,v_2,\dots,v_g$ so that
        \[
         \operatorname{ord}_\mathfrak{m} = v_0 < v_1 < v_2 < \cdots < v_g < v_{g+1} = v,
        \]
        and a strictly increasing sequence of positive integers $m_0,m_1,m_2,\dots,m_g,m_{g+1}$ such that $m(w) = m_j$ for all $w\in (v_{j-1},v_j]$ and $1\leq j \leq g+1$.
        A pair of such sequences $[(v_j)_0^g;(m_j)_0^g]$ is an \emph{approximating sequence}\footnote{In \cite{FavreJonsson2004}, they denote an approximating sequence only using $(v_j)_0^g$. Here, we also include the data $(m_j)_0^g$.} for $v$.
        If $v$ is infinitely singular, we can extend the previous definition naturally to this case (with $g=\infty$).
    

    The \emph{thinness}\footnote{For a motivation of the name `thinness', see \cite[Remark~3.49]{FavreJonsson2004}.} of a valuation $v\in\mathcal{V}$ is
    \[
    A^{\textup{thin}}(v) \coloneq 2+\int_{ \operatorname{ord}_\mathfrak{m}}^v m(w)d\alpha(w).
    \]
    Equivalently, if $[(v_j)_0^g;(m_j)_0^g]$ is the approximating sequence of $v$, then
    \[
    A^{\textup{thin}}(v) = 2 + \sum_{j=0}^{g} m_j(\alpha(v_{j+1}) - \alpha(v_j)).
    \]

Finally, the skewness and the thinness are lower semicontinuous with respect to the observer's topology. This is because they are parametrizations on $\mathcal{V}$ (see \cite[Theorem~3.26]{FavreJonsson2004}), and a parametrization on a non-metric tree is always lower semicontinuous (see \cite[Proposition 3.8]{FavreJonsson2004}).

\subsection{The Universal Dual Graph}
\label{subsection: The Universal Dual Graph}
We review another approach to the valuative tree, which is from the point of view of birational geometry and is more combinatorial. We mostly follow \cite[Chapter~6]{FavreJonsson2004}.

We define $\mathfrak{B}$ to be the collection of all proper birational maps $\pi\colon Y\to \operatorname{Spec}R$ with $Y$ a regular scheme;
we also note that $\pi$ is always a sequence of blow-ups at closed points (see, for example, \cite[\href{https://stacks.math.columbia.edu/tag/0C5R}{Tag 0C5R}]{stacks-project}).
Each $\pi\in\mathfrak{B}$ is associated with a (combinatorial) graph $\Gamma_\pi$, called the \emph{dual graph} of $\pi$, constructed as follows: Vertices of $\Gamma_\pi$ are irreducible components of $\pi^{-1}(\mathfrak{m})$, and if two irreducible components intersect, there is an edge between them. We observe the following:
\begin{itemize}
    \item The collection of vertices of $\Gamma_\pi$ is denoted by $\Gamma_\pi^*$, and is also called the \emph{dual graph} of $\pi$ if no confusion arises.
    \item If $\pi_0:\operatorname{Bl}_\mathfrak{m}R \to \operatorname{Spec}R$ is the single blow-up at $\mathfrak{m}$, we write $E_0$ the exceptional divisor of $\pi_0$.
    By the universal property of the blow-up, $E_0\in\Gamma_\pi$ for all $\pi\in\mathfrak{B}$.
    \item For $\pi\in\mathfrak{B}$, define a partial order on $\Gamma_\pi^*$ as follows:
    $E'\leq_\pi E$ if there is a containment of segments $[E_0, E']\subset [E_0, E]$ in $\Gamma_\pi$.\footnote{A segment $[v, v']$ in a (combinatorial) tree is the minimal subtree containing the vertices $v$ and $v'$.}
    This construction makes $(\Gamma_\pi^*,\leq_\pi)$ a poset.
\end{itemize}
Define a partial order on $\mathfrak{B}$ by declaring that $\pi'\leq \pi$ if $\pi$ factors through $\pi'$, which makes $(\mathfrak{B},\leq)$ a directed poset.
Note that if $\pi'\leq \pi$ then there is an order-preserving inclusion map between dual graphs $\iota_{\pi',\pi}\colon\Gamma_{\pi'}^*\to\Gamma_\pi^*$, which indicates that
$((\Gamma_\pi^*,\leq_{\pi})_{\pi\in\mathfrak{B}},(\iota_{\pi',\pi})_{\pi'\leq\pi\in\mathfrak{B}})$ is a direct system of posets. The colimit (direct limit) of this direct system
\[
(\Gamma^*,\leq) \coloneq \operatorname*{colimit}_{\pi\in\mathfrak{B}} (\Gamma_\pi^*,\leq_\pi)
\]
is called the \emph{universal dual graph}.
Intuitively, $\Gamma^*$ is the union of all dual graphs $\Gamma_\pi^*$.
We also see that $E'\leq E$ in $\Gamma^*$ if and only if $E'\leq_\pi E$ for some $\pi\in\mathfrak{B}$.
The universal dual graph $(\Gamma^*,\leq)$ is a non-metric $\mathbb{Q}$-tree, rooted at $E_0$. $\Gamma^*$ induces a smallest complete non-metric $\mathbb{R}$-tree containing $\Gamma^*$, which is also called the \emph{universal dual graph}, denoted by $\Gamma$.

We recall some important numerical invariants on the universal dual graph. The \emph{Farey Weight} is a function $\Gamma^*\to\mathbb{Z}_{\geq 1}^2$ sending $E$ to a pair of positive integers $(a(E),b(E))$, constructed as follows. Recall that every $E\in\Gamma^*$ can be reached by a sequence of point blow-ups (see \cite[Exercise~9.9]{SwansonHuneke2006}), so we can define the Farey Weight inductively.
First, define $(a(E_0),b(E_0)) = (2,1)$. Now let $E\in\Gamma^*$; then, there is $\pi\colon Y\to\operatorname{Spec}R$ such that $E$ is the exceptional divisor of a point blow-up $\beta:\operatorname{Bl}_pY\to Y$, where $p$ is some point in $\pi^{-1}(\mathfrak{m})$. If $p$ is only in a single divisor $E'$ of $Y$, define $(a(E),b(E)) = (a(E')+1,b(E'))$; otherwise, if $p$ lies in the intersection of two exceptional divisors $E'$ and $E''$ of $Y$, define $(a(E),b(E)) = (a(E'),b(E')) + (a(E''),b(E''))$. Finally, for $E\in\Gamma^*$, we define
\[
m^{\textup{Farey}}(E) = \min\{ b(E') \mid E'\geq E \},
\]
called the \emph{Farey multiplicity} of $E$.\footnote{In \cite{FavreJonsson2004}, they simply call $m^{\textup{Farey}}$ multiplicity and use the notation $m$.}

The relationship between $\mathcal{V}$ and $\Gamma$ is stated in the following theorem.
\begin{theorem}[The Isomorphism between $\Gamma$ and $\mathcal{V}$]
    \label{thm: Isom_between_Gamma_and_Vm}
    There is an isomorphism
    \[    
        \Phi\colon(\Gamma,A^{\textup{Farey}})\to(\mathcal{V},A^{\textup{thin}})
    \]
    of parametrized trees preserving the multiplicities (i.e., $m^{\textup{Farey}} = m\circ\Phi$).
\end{theorem}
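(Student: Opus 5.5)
We construct $\Phi$ on $\Gamma^*$ by $\Phi(E) = b(E)^{-1}\operatorname{ord}_E$, check that it is an order-preserving bijection onto the divisorial valuations of $\mathcal{V}$ matching the parametrizations, and then extend to $\Gamma$ by completeness.

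The first step is normalization. We show by induction on the number of point blow-ups that
\[
b(E) = \operatorname{ord}_E(\mathfrak{m})
\quad\text{and}\quad
a(E) = A_X(E) = \operatorname{ord}_E(K_{Y/X})+1.
\]
This is characteristic-free, since it only uses the behaviour of orders of vanishing and of $K$ under blowing up a smooth point of a regular surface.
\begin{itemize}
  \item For $E_0$ we have $\operatorname{ord}_{E_0}(\mathfrak{m})=1$ and $A(E_0)=2$.
  \item If $E$ comes from blowing up a free point $p\in E'$, then $\operatorname{ord}_E = \operatorname{ord}_{E'} + \operatorname{ord}_p$ on pullbacks of $\mathfrak{m}$ and of the Jacobian ideal. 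This gives $b(E)=b(E')$ and $A(E)=A(E')+1$.
  \item At a satellite point $p\in E'\cap E''$, both quantities add.
\end{itemize}
Hence $\Phi(E)\in\mathcal{V}$ and is divisorial. Every divisorial valuation centered at $\mathfrak{m}$ arises this way, because its center sequence on successive blow-ups is a sequence of point blow-ups (Zariski/Abhyankar, valid in any characteristic). Distinct divisors give non-proportional valuations, so $\Phi$ is a bijection from $\Gamma^*$ onto the divisorial part of $\mathcal{V}$.

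The second step, which I expect to be the main obstacle, is to show that $\Phi$ is order preserving in both directions and that $A^{\textup{thin}}(\Phi(E)) = a(E)/b(E)$ and $m(\Phi(E)) = m^{\textup{Farey}}(E)$. My approach is to avoid the characteristic-$0$ arguments of \cite{FavreJonsson2004} (Puiseux series, uniformization of curvettes) and instead compare the two trees through monomial valuations in local coordinates.

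At a satellite point $p\in E'\cap E''$ with local coordinates $(z,w)$ cutting out $E'$ and $E''$, the segment $[\Phi(E'),\Phi(E'')]$ in $\mathcal{V}$ consists of the normalized pushforwards of the monomial valuations $\pi_* v_{(s,t)}$. The blow-ups inside this chart produce exactly the Farey subdivision, namely the divisor with weights $(s,t)$ has Farey weight $s(a',b')+t(a'',b'')$. The required identities then reduce to two formulas:
\[
\operatorname{ord}(\mathfrak{m}) = s\,b' + t\,b'', \qquad A = s\,a' + t\,a''.
\]
Both follow because $\pi^*\mathfrak{m}$ and $K_{Y/X}$ are locally monomial, $z^{b'}w^{b''}$ and $z^{a'-1}w^{a''-1}$ up to units. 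The ratio $a/b$ is therefore the correct parameter along edges.

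To identify this ratio with $A^{\textup{thin}}$, I compute the derivative of $A^{\textup{thin}}$ along such a segment, using the SKP description of multiplicity (Proposition~\ref{prop: SKP_Computes_Skewness_and_Multiplicity}). Along the segment, $\alpha$ equals $t/\bigl(b(s b'+t b'')\bigr)$ up to normalization, and $m$ is constant and equal to $\min(b',b'')$ on the open segment. One checks that
\[
dA^{\textup{thin}} = m\, d\alpha
\]
agrees with $d(a/b)$ there. This gives $A^{\textup{Farey}} = A^{\textup{thin}}\circ\Phi$ on $\Gamma^*$ and also shows that $\Phi$ maps edges monotonically onto segments of $\mathcal{V}$.

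For the multiplicities, note that $m^{\textup{Farey}}(E)$ is the minimal $b(E')$ for $E'\ge E$. Also $b(E')=\operatorname{ord}_{E'}(\mathfrak{m})$ equals the multiplicity of a curvette transverse to $E'$, and the curvettes $f$ with $v_f\ge\Phi(E)$ are exactly those through points of the branches above $E$. Hence $m^{\textup{Farey}} = m\circ\Phi$. This step uses only intersection theory on $Y$, so it holds in any characteristic.

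Finally, both $\Gamma$ and $\mathcal{V}$ are the completions of their divisorial (respectively rational) parts. The divisorial valuations are dense in each segment of $\mathcal{V}$, since irrational, curve, and infinitely singular valuations are limits of divisorial ones, using the SKP approximations and weak convergence. The parametrizations are lower semicontinuous (Proposition~\ref{prop:parametrication_is_w.l.semicont}) and increasing. Therefore the order isomorphism $\Phi\colon\Gamma^*\to\mathcal{V}_{\mathrm{div}}$ extends uniquely to an isomorphism of complete trees $\Gamma\to\mathcal{V}$, by sending suprema of increasing chains to suprema, and it preserves $A$ and $m$ by monotone limits.
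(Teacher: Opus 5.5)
There is a genuine gap. Your normalization step ($a=A_X$, $b=\operatorname{ord}(\mathfrak{m})$, proved by induction on blow-ups) and your final extension to $\Gamma$ match the paper. But the step you call the main obstacle rests on a false claim, and the decisive identity is only asserted.

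\textbf{The false claim.} It is not true that $m$ equals $\min(b',b'')$ on the open edge between adjacent divisors $E'<E''$. The correct value there is $m(v_{E''})=m^{\textup{Farey}}(E'')$, which can be smaller than both $b'$ and $b''$. Here is a counterexample:
\begin{itemize}
\item Blow up the free point of $E_0$ on the strict transform of $\{y=0\}$. This gives $E_1$, with weight $(3,1)$ and valuation $v_{(1,2)}$.
\item Blow up $E_0\cap E_1$, then $E_0\cap E_2$, then $E_3\cap E_2$. This gives $E_2,E_3,E_4$ with weights $(5,2),(7,3),(12,5)$ and valuations $v_{(2,3)},v_{(3,4)},v_{(5,7)}$.
\item $E_3$ and $E_4$ are adjacent with $\min(b',b'')=3$.
\item The segment between them consists of the monomial valuations $v_{(1,t)}$, $4/3\le t\le 7/5$. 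These all lie below $v_y$, so $m\equiv 1$ there.
\item Indeed $1\cdot(7/5-4/3)=1/15=12/5-7/3$. With $m=3$ your check would fail.
\end{itemize}
Your formula for $\alpha$ along the segment is also not derived and contains an unspecified $b$. The slope of $\alpha$ on an edge depends on $\operatorname{ord}_{E'}(f)$ and $\operatorname{ord}_{E''}(f)$ for a suitable test curve $f$, and that is exactly the data you would need to control. So the sentence ``one checks that $dA^{\textup{thin}}=m\,d\alpha$ agrees with $d(a/b)$'' hides the whole content of the theorem.

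\textbf{How the paper does it.}
\begin{itemize}
\item It first needs the multiplicity on edges. The Farey multiplicity is constant on $(E',E]$ for a free blow-up, and $m^{\textup{Farey}}=m\circ\Phi$ on $\Gamma^*$. Your multiplicity paragraph must therefore come before the $A$ computation, not after it.
\item On $(E',E]$ one has $m\equiv m(v_E)$. Choose $f$ irreducible with $v_f\ge v_E$ and $\operatorname{ord}_\mathfrak{m}(f)=m(v_E)$.
\item Proposition~\ref{prop:Using_Skewness_to_Complute_Semivaluation} then gives $m(v_E)\,\alpha(w)=w(f)$ on the segment. So the increment of $A^{\textup{thin}}$ is $\operatorname{ord}_E(f)/b(E)-\operatorname{ord}_{E'}(f)/b(E')$.
\item Matching this with $a(E)/b(E)-a(E')/b(E')$ is done by induction on the number of blow-ups, not edge by edge:
\begin{itemize}
\item For a free blow-up, $b(E)=b(E')$, and $\operatorname{ord}_E(f)=\operatorname{ord}_{E'}(f)+1$ because the strict transform of $f$ is transverse to $E$. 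Both increments equal $1/b(E')$.
\item For a satellite blow-up at $E'\cap E''$, additivity of $\operatorname{ord}$ and of $(a,b)$ shows that both increments over $[E',E]$ are $\frac{b''}{b'+b''}$ times the corresponding increments over $[E',E'']$. These agree by the induction hypothesis.
\end{itemize}
\end{itemize}
This induction never needs a closed formula for $m$ on an arbitrary edge. Replacing your derivative check with it, and proving the multiplicity statement first, makes the rest of your outline go through.
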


\cite[Section~6.4]{FavreJonsson2004} provides a proof of Theorem~\ref{thm: Isom_between_Gamma_and_Vm} when $\operatorname{char}\Bbbk = 0$. However, in that book after Lemma 6.27, they prove that $A^{\textup{Farey}} = A^{\textup{thin}}\circ\Phi$ by using the statement that $A^{\textup{Farey}} = 1+\hat{\beta}$, which is true in characteristic 0 but false in positive characteristic (here, $\hat\beta$ is the Puiseux parameter, the negative log of Berkovich radius, on the closure of an open disc of the Berkovich space of $\mathbb{A}^1_K$, where $K$ is the completion of Puiseux series field). The good news is that Theorem \ref{thm: Isom_between_Gamma_and_Vm} remains true in positive characteristic, but it requires a different proof.

In the rest of this section, we establish an alternative proof of Theorem~\ref{thm: Isom_between_Gamma_and_Vm} that works in arbitrary characteristic.
The strategy is first to prove it on the restriction $\Gamma^*$; for this purpose, we list the properties needed.
Then, we reprove the statement $A^{\textup{Farey}} = A^{\textup{thin}}\circ\Phi$ in Theorem~\ref{thm: A_Farey_Equals_A_Thinness}.

\begin{definition}[$\mathcal{V}_{\textup{div}}$]
    We define $\mathcal{V}_{\textup{div}}$ to be the collection of all divisorial valuations in $\mathcal{V}$.
\end{definition}

\begin{lemma}
    \label{lemma: Isom_between_Gamma_*_and_V_div}
    The natural map $\Phi\colon\Gamma^*\to\mathcal{V}_{\textup{div}}$ sending $E$ to its normalized divisorial valuation $v_E$ is an order-preserving bijection.
\end{lemma}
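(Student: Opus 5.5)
We need to show that $E\mapsto v_E$ is well defined, injective, surjective onto $\mathcal{V}_{\textup{div}}$, and order preserving. The map is well defined because $\Gamma^*$ is a colimit along inclusions: a divisor $E$ appearing on different models $Y,Y'$ gives the same valuation $\operatorname{ord}_E$ on $\operatorname{Frac}R$, since $\operatorname{ord}_E$ depends only on the local ring $\mathcal{O}_{Y,E}$ and this ring is preserved under the further blow-ups. After normalizing by $b_E=\operatorname{ord}_E(\mathfrak{m})^{-1}$ we get $v_E\in\mathcal{V}$. Every $E\in\Gamma^*$ lies over $\mathfrak{m}$, so $v_E$ is centered at $\mathfrak{m}$ and is divisorial by definition.

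\textbf{Injectivity.} Suppose $v_E=v_{E'}$. Take a common model $\pi\in\mathfrak{B}$ containing both divisors; this exists because $\mathfrak{B}$ is directed. The centers of these valuations on $Y$ must coincide, and the center of $\operatorname{ord}_E$ on $Y$ is the generic point of $E$. Hence $E=E'$.

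\textbf{Surjectivity.} Let $v=c\cdot\pi_*\operatorname{ord}_E$ for some normal $Y\to\operatorname{Spec}R$ and prime divisor $E$. Since $v$ is centered at $\mathfrak{m}$, $E$ is exceptional. I would apply the standard Zariski argument, valid in any characteristic for regular two-dimensional schemes (cf.\ \cite[Exercise~9.9]{SwansonHuneke2006} and \cite[Tag 0C5R]{stacks-project}). Starting from $\operatorname{Spec}R$, repeatedly blow up the center of $\operatorname{ord}_E$. Each blow-up is of a closed point of a regular surface, and the center remains a closed point as long as the valuation ring has not yet been reached. By Abhyankar/Zariski, after finitely many steps the center becomes a divisor $E'$ with $\mathcal{O}_{Y',E'}=\mathcal{O}_v$. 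The model $Y'$ lies in $\mathfrak{B}$, so $E'\in\Gamma^*$ and $v_{E'}=v$. This finiteness step, that a divisorial valuation of a regular local ring of dimension two becomes a divisor after finitely many point blow-ups, is the main input, and I expect it to be the main obstacle to cite cleanly.

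\textbf{Order preservation.} Suppose $E'\leq E$, witnessed on some $\pi$; that is, $E'$ lies on the segment $[E_0,E]$ in $\Gamma_\pi$. I would argue by induction along the sequence of point blow-ups producing $\pi$, tracking the segment from $E_0$ to the newest divisor. The key local computation is as follows. If $E$ is the blow-up of a point $p$ on $E'$ alone, or on $E'\cap E''$, then in local coordinates $(s,t)$ at $p$ we have $\operatorname{ord}_E$ equal to the monomial valuation with weights $(1,1)$, or with weights given by the vanishing orders of the local equations. In either case $\operatorname{ord}_E(g)\geq \operatorname{ord}_{E'}(g)\cdot(\text{appropriate factor})$ for every $g\in R$ after normalization. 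More concretely, I would show that for $E'\leq E$ the normalized valuations satisfy $v_{E'}(g)\le v_E(g)$. To do this, compare both via the monomial valuations $v_{\mathbf w}$ in coordinates at the intersection point along the chain. The normalized valuations on a chain from $E_0$ to $E$ correspond to weights interpolating monotonically, which mirrors the Farey-weight recursion $(a,b)$. Concatenating these local monotonicity statements along the chain yields $v_{E'}\leq v_E$.

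For the converse, use injectivity together with the tree structures. If $v_{E'}\le v_E$ but $E'\not\le E$, take a model containing both divisors. Let $F=E\wedge E'$ in $\Gamma_\pi$. The segments from $F$ toward $E$ and toward $E'$ leave $F$ through different points of $F$. Then test functions $g$ whose strict transforms pass through those points, for instance Weierstrass polynomials given by curvettes, separate the valuations. This contradicts $v_{E'}\le v_E$. Hence the map is an order-preserving bijection that reflects order.
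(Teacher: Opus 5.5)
Your well-definedness, injectivity (equal valuations have the same centre on a common model) and surjectivity (the Abhyankar--Zariski theorem that blowing up the successive centres of a divisorial valuation of a two-dimensional regular local ring terminates; alternatively, resolve the normal model and factor via \cite[Tag~0C5R]{stacks-project}) are fine. The paper gives no argument here and simply defers to \cite[Subsections~6.5.1 and 6.5.3]{FavreJonsson2004}. The genuine gap is order preservation, which is the real content of the lemma. The only concrete estimate you give is a \emph{lower} bound, and that cannot handle the satellite case.

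Suppose $E$ is the blow-up of $p=E'\cap E''$ with $E'<E''$. The new vertex is inserted on an edge, so besides $v_{E'}\le v_E$ you also need $v_E\le v_{E''}$. This is required for every relation $E\le G$ with $G\ge E''$, where the newest divisor is the \emph{smaller} one; tracking only segments from $E_0$ to the newest divisor misses it. Write $\pi'^*g=s^{a'}t^{a''}h$ at $p$, with $E'=\{s=0\}$, $E''=\{t=0\}$, $b'=b(E')$, $b''=b(E'')$. Then
\[
v_E(g)=\frac{a'+a''+\operatorname{ord}_p h}{b'+b''}\ \ge\ \frac{a'+a''}{b'+b''}.
\]
The right-hand side is a mediant, so $v_E(g)\ge v_{E'}(g)$ once $v_{E'}\le v_{E''}$ is known. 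But $v_E(g)\le v_{E''}(g)$ is equivalent to $b''(a'+\operatorname{ord}_p h)\le a''b'$, and the local picture at $p$ puts no bound on $\operatorname{ord}_p h$. Equivalently, fix $g$ and let $\nu_{(1,\rho)}$ be the monomial valuation with weights $(1,\rho)$ in $(s,t)$. The function $\rho\mapsto \nu_{(1,\rho)}(\pi'^*g)/(b'+\rho b'')$ is concave piecewise linear over affine, hence only quasi-concave, so its values at $\rho=0$ and $\rho=\infty$ do not make it monotone. Your ``weights interpolating monotonically'' is exactly the claim that needs proof.

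One way to close the gap with purely local computations is to strengthen the inductive hypothesis from the endpoints of an edge to the whole edge. Namely: for every edge $F<G$ of $\Gamma_\pi$ meeting at $q$, with $F=\{s=0\}$ and $G=\{t=0\}$ at $q$, the normalized push-forwards $\pi_*\nu_{(1,\rho)}/(b(F)+\rho\, b(G))$ are nondecreasing in $\rho\in[0,\infty]$. This is inherited under both kinds of blow-up.
\begin{itemize}
\item Free blow-up at $q\in F$, with $\pi'^*g=s^{a}h$ near $q$: the new edge $F<E$ corresponds to weights $(1+\rho,\rho)$ in the old coordinates, normalized by $s^{b(F)}$ alone. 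So monotonicity reduces to that of $\sigma\mapsto\nu_{(1,\sigma)}(h)$, which is clear.
\item Satellite blow-up: the two new edges carry the old family restricted to $\sigma\in[0,1]$ and $\sigma\in[1,\infty]$, via weights $(1+\rho,\rho)$ and $(1,1+\rho)$, so monotonicity is inherited.
\end{itemize}
Comparing endpoints then gives $v_F\le v_G$ on every edge, and chaining along $[E_0,E]$ gives the forward implication.

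Your converse is likewise only a sketch. A curvette $g$ of $E$ is the right test function, but you still have to show $v_{E'}(g)<v_E(g)$. That requires controlling $\operatorname{ord}_{E'}(g)$ on the branch the strict transform of $g$ avoids, which needs an argument of the same kind, not just the choice of $g$.
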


\begin{proof}
    \cite[Subsection~6.5.1 and Subsection~6.5.3]{FavreJonsson2004} provides a proof in characteristic 0 that also works in positive characteristic.
\end{proof}

\begin{lemma}
    \label{lemma: Geometric_Meaning_of_Farey_Parameters}
    Given $\pi\in\mathfrak{B}$ and $E\in\Gamma^*$, we have equalities $a(E) = A_X(E)$ and $b(E) = \operatorname{ord}_E(\pi^*\mathfrak{m})$,
    where $X = \operatorname{Spec}R$.
\end{lemma}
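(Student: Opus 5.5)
We argue by induction on the number of point blow-ups needed to create $E$; this number exists because every $E\in\Gamma^*$ arises from a finite sequence of point blow-ups starting from $\pi_0$. Both $A_X(E)$ and $\operatorname{ord}_E(\pi^*\mathfrak{m})$ depend only on the divisorial valuation $\operatorname{ord}_E$, not on the model. So it suffices to compute them on a model $Y'=\operatorname{Bl}_pY$ where $E$ is the exceptional divisor of the blow-up $\beta$ of a closed point $p$ of a regular model $Y\in\mathfrak{B}$. For the base case $E_0$: the blow-up of the regular surface $X$ at $o$ has $K_{\operatorname{Bl}_oX/X}=E_0$, so $A_X(E_0)=2$. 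Also $\mathfrak{m}\cdot\mathcal{O}=\mathcal{O}(-E_0)$, so $\operatorname{ord}_{E_0}(\mathfrak{m})=1$. This matches $(a(E_0),b(E_0))=(2,1)$.

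For the inductive step, write $K_{Y/X}=\sum_{F}(A_X(F)-1)F$ and $\pi^{-1}\mathfrak{m}\cdot\mathcal{O}_Y=\mathcal{O}_Y(-\sum_F \operatorname{ord}_F(\mathfrak{m})F)$, where $F$ ranges over the exceptional prime divisors of $\pi$. These are the divisorial parts; $\mathfrak{m}\mathcal{O}_Y$ is invertible once $Y$ dominates $\operatorname{Bl}_oX$, which holds for all $\pi\in\mathfrak{B}$ since each factors through $\pi_0$. We use two standard facts for the blow-up $\beta$ of a regular surface at a closed point $p$:
\begin{itemize}
\item[(i)] $K_{Y'/Y}=E$, so $K_{Y'/X}=\beta^*K_{Y/X}+E$;
\item[(ii)] for an effective Cartier divisor $D$ on $Y$, $\operatorname{ord}_E(\beta^*D)=\operatorname{mult}_p D$.
\end{itemize}
Both are characteristic-free, being local computations in $\widehat{\mathcal{O}}_{Y,p}\cong\Bbbk[[u,v]]$ (for (i), pull back $du\wedge dv$ in a blow-up chart).

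Since the exceptional divisors of $\pi$ are smooth and meet transversally, in snc fashion, we can choose parameters $u,v$ at $p$ adapted to them. If $p$ lies on a single $F=E'$ with local equation $u$, then $\operatorname{mult}_p F=1$ and the other components do not pass through $p$. By (ii), $\operatorname{ord}_E(\beta^*K_{Y/X})=A_X(E')-1$, and with (i) this gives $A_X(E)=A_X(E')-1+1+1=A_X(E')+1$. Likewise $\operatorname{ord}_E(\mathfrak{m})=\operatorname{ord}_{E'}(\mathfrak{m})$. These match $(a(E')+1,b(E'))$.

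If $p=E'\cap E''$ with local equations $u,v$, then (ii) applied to the divisor $(A_X(E')-1)E'+(A_X(E'')-1)E''$ gives $\operatorname{ord}_E(\beta^*K_{Y/X})=A_X(E')+A_X(E'')-2$. With (i) this yields $A_X(E)=A_X(E')+A_X(E'')$, and similarly $\operatorname{ord}_E(\mathfrak{m})=\operatorname{ord}_{E'}(\mathfrak{m})+\operatorname{ord}_{E''}(\mathfrak{m})$. These match the additive rule for the Farey weight, which closes the induction.

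The main obstacle is bookkeeping rather than depth. We must confirm that $p$ lies on at most two exceptional components, which follows from the snc property of $\pi^{-1}(\mathfrak{m})$ on a sequence of point blow-ups, itself proved by the same induction. We must also confirm that the Farey weight is well defined, that is, independent of the chosen blow-up sequence. Since we have shown it equals model-independent invariants of $\operatorname{ord}_E$, well-definedness follows a posteriori.
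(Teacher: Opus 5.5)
Your proposal is correct and follows essentially the same route as the paper: induction on the number of point blow-ups, splitting into the cases where $p$ lies on one or on two exceptional components. In each case you use the chain rule $K_{Y'/X}=\beta^*K_{Y/X}+K_{Y'/Y}$ with $K_{Y'/Y}=E$, and the fact that $\operatorname{ord}_E$ of a pulled-back divisor equals its multiplicity at $p$. The differences are minor: the paper cites de Fernex--Ein--Musta\c{t}\u{a} (special differentials) to justify the chain rule over $\Bbbk[[x,y]]$ where you appeal to a local computation, and your remark that well-definedness of the Farey weight follows a posteriori is something the paper leaves implicit.
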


\begin{proof}
    Write $\pi\colon Y\to X$ as a sequence of $n+1$ blow-ups of closed points.
    The only relevant case is that $E$ is the exceptional divisor in the last blow-up, so we assume it and write $\pi = \pi'\circ\varphi$, where $\pi'\colon Y'\to X$ is the first $n$ blow-ups and $\varphi\colon Y\to Y'$ is the last blow-up at a closed point $p\in (\pi')^{-1}(\mathfrak{m})$.
    We prove by induction on $n$; the base case $n=0$ is clear.

    To show that $a(E) = A_X(E)$, note from \cite[Appendix~A]{deFernex&Ein&Mustata2011} that the relative canonical divisors can be defined via \emph{the sheaf of special differentials}, and we have the transformation rule
    \[
    K_{Y/X} = K_{Y/Y'} + \varphi^*\left(K_{Y'/X}\right)
    \]
    from \cite[Lemma~A.13]{deFernex&Ein&Mustata2011}. If $p$ is in a single exceptional divisor $E'$ of $Y'$, we have
    \begin{align*}
    A_X(E)
    &= 1 + \operatorname{ord}_E\left(K_{Y/X}\right) \\
    &= 1 + \operatorname{ord}_E(K_{Y/Y'}) + \operatorname{ord}_E(\varphi^*(K_{Y'/X}))\\
    &= 1 + 1 + \operatorname{ord}_{E'}(K_{Y'/X}) \\
    &= 1 + a(E') \\
    &= a(E).
    \end{align*}
    Similarly, if $p$ is in the intersection of exceptional components $E'$ and $E''$ of $Y'$, we obtain
    \begin{align*}
    A_X(E)
    &= 1 + \operatorname{ord}_E(K_{Y/Y'}) + \operatorname{ord}_E(\varphi^*(K_{Y'/X}))\\
    &= 1 + 1 + \operatorname{ord}_{E'}(K_{Y'/X}) + \operatorname{ord}_{E''}(K_{Y'/X}) \\
    &= a(E') + a(E'') \\
    &= a(E).
    \end{align*}

    Checking $b(E) = \operatorname{ord}_E(\pi^*\mathfrak{m})$ is straightforward. If $p$ is in a single divisor $E'$ locally cut out by a function $f$ and if $E$ is locally cut by a function $g$, then
    \[
        \pi^*\mathfrak{m}
        = \varphi^*\pi'^*\mathfrak{m}
        = \varphi^*\left(g^{b(E')}u'\right)
        = f^{b(E')}u\varphi^*(u'),
    \]
    where $u'$ and $u$ are local invertible functions around $E'$ and $E$ respectively. Similarly, if $p$ is in the intersection of two exceptional divisors $E'$ and $E''$, locally cut out by functions $g_1$ and $g_2$, then
    \[
        \pi^*\mathfrak{m}
        = \varphi^*\pi'^*\mathfrak{m}
        = \varphi^*\left(g_1^{b(E')}g_2^{b(E'')}u'\right)
        = f^{b(E')+b(E'')}u\varphi^*(u'),
    \]
    where again $u'$ and $u$ are local invertible functions around $E'$ and $E$ respectively.
\end{proof}

\begin{lemma}
    \label{lemma:Multipliticy_is_Constant_on_Segments}
    Let $E, E'\in\Gamma^*$ so that $E'$ is the exceptional divisor of a blow-up at a point lying only on a single exceptional divisor $E$. We then have $m^{\textup{Farey}}(E') = b(E') = b(E)$, and the Farey multiplicity is constant on the interval $(E, E']$ with value $b(E')$.
\end{lemma}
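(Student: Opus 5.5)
The plan is to use only the inductive definition of the Farey weight and of $m^{\textup{Farey}}(E)=\min\{b(E')\mid E'\ge E\}$ in $\Gamma^*$, together with the description of the tree structure of $\Gamma^*$ through blow-ups. There are three steps.

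\textbf{Step 1: $b(E')=b(E)$.} This is immediate from the definition. Since $E'$ is obtained by blowing up a point lying only on $E$, we have $(a(E'),b(E'))=(a(E)+1,b(E))$. By Lemma~\ref{lemma: Geometric_Meaning_of_Farey_Parameters}, this says that the order of $\pi^*\mathfrak{m}$ is unchanged.

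\textbf{Step 2: $b$ does not decrease along the tree.} Every $F\in\Gamma^*$ with $F\ge E'$ arises from further point blow-ups at points of the subtree lying over $E'$. More precisely, $F$ appears after blowing up points of $E'$, or of divisors created later above $E'$, and every such point lies on one or two exceptional divisors. By induction on the number of blow-ups, each such $F$ satisfies $b(F)\ge b(E')$, because the weight of a new divisor is either $b$ of one parent or the sum of $b$ of two parents, and all parents in this subtree have $b\ge b(E')$.

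One issue here needs care. A point in $E'\cap E$ has parent $E$ with $b(E)=b(E')$, so the sum is still at least $b(E')$ and the bound survives. A later divisor $F$ can also meet some earlier divisor $D$ not above $E'$; that $D$ has $b(D)\ge 1$, so adding it only increases $b$. Hence $b(F)\ge b(E')$ for all $F\ge E'$, and taking $F=E'$ gives $m^{\textup{Farey}}(E')=b(E')$.

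\textbf{Step 3: constancy on $(E,E']$.} The points of $\Gamma^*$ in $(E,E']$ are exactly the divisors created by repeatedly blowing up the intersection point of $E'$ (or of the newest divisor between $E$ and $E'$) with the other divisor on that segment. Concretely, these are all obtained by blowing up points of $E_1\cap E_2$ where $E_1,E_2$ lie on $[E,E']$.

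By induction, their weights are positive integer combinations $(a,b)=p(a(E),b(E))+q(a(E'),b(E'))$ with $p\ge 0$ and $q\ge 1$. In the $\mathbb{Q}$-tree structure these are ordered between $E$ and $E'$.

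For such $G\in(E,E']$ we therefore get $b(G)=(p+q)b(E)\ge b(E)$. Moreover $E'\ge G$, which gives $m^{\textup{Farey}}(G)\le b(E')=b(E)$. Every $F\ge G$ lies over the segment or above $E'$, so the Step 2 argument applied to $G$ gives $b(F)\ge b(E)$. Hence $m^{\textup{Farey}}(G)=b(E)$.

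The main obstacle is Step 3: one must identify the elements of $\Gamma^*$ in $(E,E']$ precisely, namely that they are exactly the divisors obtained by blowing up intersection points of consecutive divisors on the chain from $E$ to $E'$. One must also check that any $F\ge G$ is created from points whose parents all have $b\ge b(E)$. I would handle this through the description of $\leq_\pi$ via segments in the dual graph $\Gamma_\pi$. Blowing up a satellite point $E_1\cap E_2$ subdivides the edge $[E_1,E_2]$, whereas blowing up a free point adds a leaf. The elements of $(E,E']$ are therefore exactly the vertices subdividing the original edge $[E,E']$. Every $F\ge G$ lies in a branch attached at or beyond $G$, so all ancestors contributing to $b(F)$ have $b\ge b(E)$.
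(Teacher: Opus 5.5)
Your argument is correct. It differs from the paper's mainly because the paper gives no argument of its own: it cites Favre--Jonsson and asserts that their characteristic-zero proof carries over. You instead give a self-contained induction on point blow-ups. It uses only the Farey recursion and the fact that a free blow-up adds a leaf to the dual graph, while a satellite blow-up subdivides an edge. This makes the characteristic-independence evident, since nothing beyond the combinatorics of point blow-ups enters. The citation, by contrast, is shorter but leaves that check to the reader.

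Two places should be tightened.

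First, the heading of Step 2 overstates: $b$ is not monotone along $\Gamma^*$. Blowing up a free point of $E_0$ gives $E_1$ with weight $(3,1)$. Blowing up $E_0\cap E_1$ then gives $E_2<E_1$ with weight $(5,2)$. What you actually use, and what is true, has two parts:
\begin{itemize}
\item $b(F)\ge b(D)$ for every parent $D$ of $F$;
\item because $E'$ was a leaf when created, every $F>E'$ is created later and has a parent $\ge E'$.
\end{itemize}
The second part is exactly where the free-point hypothesis is needed. Also, blowing up $E\cap E'$ produces a divisor \emph{below} $E'$, so that case is irrelevant to Step 2, though harmless.

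Second, in Step 3 some $F\ge G$ predate $G$ (for instance $E'$ itself when $G\neq E'$). So ``the Step 2 argument applied to $G$'' is not literally an induction starting at $G$. The clean fix is to run one induction over the branch
\[
B=\{F\in\Gamma^*\mid F>E,\ F\wedge E'>E\}.
\]
All members of $B$ are created no earlier than $E'$. Each member other than $E'$ has a parent that is an earlier member of $B$: its root-side parent if that parent is $>E$, and otherwise its other parent. Hence $b\ge b(E')$ on $B$. For each $G\in(E,E']$ you have $\{F\mid F\ge G\}\subseteq B$ and $E'\ge G$. This gives $m^{\textup{Farey}}(G)=b(E')$ directly, with no need for the $(p,q)$ bookkeeping.
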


\begin{proof}
    \cite[Subsection~6.16]{FavreJonsson2004} provides a proof in characteristic 0 that also works in positive characteristic.
\end{proof}

\begin{lemma}
    \label{lemma:Farey_Multiplicity_Equals_Usual_Multiplicity}
    The map $\Phi\colon\Gamma^*\to\mathcal{V}_{\textup{div}}$ satisfies $m^{\textup{Farey}} = m\circ\Phi$.
\end{lemma}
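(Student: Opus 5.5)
We need to show that for every divisor $E\in\Gamma^*$ with normalized valuation $v_E=\Phi(E)$, we have $m^{\textup{Farey}}(E)=m(v_E)$, where
\[
m(v_E)=\min\{\operatorname{ord}_\mathfrak{m}(f)\mid f \text{ irreducible},\ v_f\geq v_E\}.
\]
The plan is to translate both sides into statements about curvettes, i.e.\ irreducible curves $V(f)$ whose strict transforms on some $Y$ meet the exceptional locus transversally at a single free point of one divisor.

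\textbf{Step 1: $b$ computes $\operatorname{ord}_\mathfrak{m}$ of curvettes.} Let $E'\in\Gamma^*$ live on $\pi\colon Y\to X$. Take a free point $p\in E'$ (a point on no other exceptional divisor) and a smooth curve germ transverse to $E'$ at $p$; its image is $V(f)$ with $f$ irreducible. By the projection formula, $\operatorname{ord}_\mathfrak{m}(f)$ equals the intersection number $(\pi^*\mathfrak{m}\cdot \tilde C)$. Lemma~\ref{lemma: Geometric_Meaning_of_Farey_Parameters} gives $\pi^*\mathfrak{m}=\sum b(F)F$ near $p$, so this intersection number is $b(E')$. Conversely, any irreducible $f$ is, after resolving it, a curvette of the last divisor $E'$ its strict transform meets, and then $\operatorname{ord}_\mathfrak{m}(f)=b(E')$.

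\textbf{Step 2: $v_f\geq v_E$ iff $E'\geq E$.} For $f$ a curvette of $E'$, the curve semivaluation $v_f$ is the limit of divisorial valuations obtained by blowing up successively along $\tilde C$. Each such blow-up occurs at a free point, so the new divisors lie above $E'$ in $\Gamma^*$ with constant Farey multiplicity by Lemma~\ref{lemma:Multipliticy_is_Constant_on_Segments}. Using the order isomorphism of Lemma~\ref{lemma: Isom_between_Gamma_*_and_V_div}, $v_f$ lies in the closure of the branch above $E'$. Hence $v_f\geq v_E$ exactly when $E$ lies on the segment from $E_0$ to $E'$ (or toward $v_f$), i.e.\ when $E\le E'$ or $E$ lies on the new divisors above $E'$. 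In the latter case the constancy of $b$ along that chain lets us replace $E'$ by the top divisor without changing the value. This is the step I expect to be the main obstacle: it requires identifying the position of $v_f$ in $\mathcal{V}$ with the combinatorics of the blow-up sequence. I would try first to avoid limits by checking directly that $v_f\geq v_E$ is equivalent to the strict transform of $V(f)$ passing through $E$ on a model containing $E$; this is a standard characterization via $v_E(g)\le \operatorname{ord}_E$ evaluated at the generic point versus points of $E$.

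\textbf{Step 3: conclude.} Combining the two steps,
\[
m(v_E)=\min\{b(E')\mid E'\ge E\}=m^{\textup{Farey}}(E).
\]
The inequality $\le$ comes from taking, for each $E'\geq E$, a curvette of $E'$. The inequality $\ge$ comes from Step 1's converse: every competitor $f$ is a curvette of some $E'\geq E$, with $\operatorname{ord}_\mathfrak{m}(f)=b(E')$.
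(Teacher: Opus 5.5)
Your skeleton matches the paper's, whose proof just cites Favre--Jonsson for the same three ingredients: curves as ends of $\Gamma$, the identity $m^{\textup{Farey}}(C)=\operatorname{ord}_\mathfrak{m}(f)$, and $m^{\textup{Farey}}(E)=\min\{m^{\textup{Farey}}(C)\mid C>E\}$. Your Step~1, using the projection formula and $\mathfrak{m}\mathcal{O}_Y=\mathcal{O}_Y(-\sum_F b(F)F)$, together with Step~3 is a valid characteristic-free replacement for the last two. Working directly from $m^{\textup{Farey}}(E)=\min\{b(E')\mid E'\ge E\}$ also saves you from extending $m^{\textup{Farey}}$ to curves by limits.

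The one thing that would fail is the shortcut you say you would try first in Step~2. On a model containing $E$, the condition ``$\tilde C$ meets $E$'' is neither necessary nor sufficient for $v_f\ge v_E$.
\begin{itemize}
\item It is not necessary: take $E=E_0$ and any model on which $\tilde C$ has been separated from the strict transform of $E_0$.
\item It is not sufficient, even on the model where $E$ is the last divisor created. Blow up the origin, then the point of $E_0$ given by the direction $y=0$ (creating $E_1$, with $v_{E_1}=v_{(1,2)}$), then the satellite point $E_0\cap E_1$ (creating $E_2$, with $v_{E_2}=v_{(1,3/2)}$). The strict transform of $y^3=x^4$, parametrized by $(t^3,t^4)$, passes through $E_2\cap E_0$. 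Yet its normalized curve semivaluation takes the value $4/3<3/2$ on $y$, so $v_f\not\ge v_{E_2}$. The reason is that a point of $E$ on the neighbour of $E$ below it represents the downward tangent direction at $E$.
\end{itemize}

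So keep the limit argument, and make it precise as follows. Suppose $\tilde C$ is a curvette of $E'$, and $E'<E'_1<E'_2<\cdots$ are obtained by repeatedly blowing up the point of $\tilde C$. Then $b(E'_n)=b(E')$. By the same projection-formula computation as in Step~1, $\operatorname{ord}_{E'_n}(g)$ increases to $(C\cdot V(g))$ for every $g$, so $v_{E'_n}$ increases to the normalized $v_f$. Since $\{w\mid w\le v_f\}$ is totally ordered and $v_E(f)<\infty$, any divisorial $v_E\le v_f$ satisfies $v_E\le v_{E'_n}$ for some $n$. Hence $E\le E'_n$ by the order isomorphism, and $m^{\textup{Farey}}(E)\le b(E'_n)=\operatorname{ord}_\mathfrak{m}(f)$.

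This also corrects your phrase ``$E$ lies on the new divisors above $E'$''. The divisorial valuations in $(v_{E'},v_f)$ include infinitely many satellite divisors between consecutive $E'_n$, not only the $E'_n$ themselves. Each of them lies below some $E'_n$, however, and that is all Step~3 needs.
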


\begin{proof}
    The details of the proof can be found in \cite{FavreJonsson2004}, which works in arbitrary characteristic. Let us go through the rough idea.

    The first step is to identify all analytically irreducible curves $C$ as ends of $\Gamma$ via \cite[Proposition~6.12]{FavreJonsson2004}. The Farey multiplicity $m^{\textup{Farey}}$ naturally extends to those curves by taking limits.\footnote{In \cite{FavreJonsson2004}, $m^{\textup{Farey}}(C)$ is denoted by $m_\Gamma(C)$.} From \cite[Corollary~6.21]{FavreJonsson2004}, we obtain
    \[
    m^{\textup{Farey}}(E) = \min\left\{m^{\textup{Farey}}(C)\mid C > E\right\}.
    \]
    Then, \cite[Lemma~6.30]{FavreJonsson2004} says that $m^{\textup{Farey}}(C) = m(v_f)$ whenever $C = V(f)$. For $E\in\Gamma^*$, we can conclude that
    \begin{align*}
        m(\Phi(E))
        &= \min\left\{ m(v_f)\mid v_f > \Phi(E) \right\} \\
        &= \min\left\{ m^{\textup{Farey}}(C)\mid C > E \right\} \\
        &= m^{\textup{Farey}}(E)
    \end{align*}
    as desired.
\end{proof}

\begin{theorem}
    \label{thm: A_Farey_Equals_A_Thinness}
    The natural map $\Phi\colon \Gamma^*\to\mathcal{V}_\textup{div}$ satisfies $A^{\textup{Farey}} = A^{\textup{thin}}\circ\Phi$.
\end{theorem}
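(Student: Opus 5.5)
The strategy is to prove a slightly stronger statement by induction on the number of point blow-ups, using the local toric structure at the blown-up point. Recall that $A^{\textup{Farey}}(E)=a(E)/b(E)$. By Lemma~\ref{lemma: Geometric_Meaning_of_Farey_Parameters} this equals $A_X(E)/\operatorname{ord}_E(\mathfrak{m})$, which is the log discrepancy of the normalized valuation $v_E=\Phi(E)$. So the theorem says that $A^{\textup{thin}}(v_E)$ equals the normalized log discrepancy of $v_E$.

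At the root both sides agree: $a(E_0)/b(E_0)=2$, and $A^{\textup{thin}}(\operatorname{ord}_\mathfrak{m})=2$. Since $A^{\textup{thin}}(v)=2+\int_{\operatorname{ord}_\mathfrak{m}}^v m\,d\alpha$ and $m^{\textup{Farey}}=m\circ\Phi$ (Lemma~\ref{lemma:Farey_Multiplicity_Equals_Usual_Multiplicity}), it suffices to check one thing. For every new divisor $E$ obtained by blowing up a point $p\in\pi^{-1}(\mathfrak{m})$, and $F$ its predecessor on $[E_0,E]$, we need
\[
\frac{a(E)}{b(E)}-\frac{a(F)}{b(F)} \;=\; m\cdot\bigl(\alpha(v_E)-\alpha(v_F)\bigr),
\]
where $m$ is the constant multiplicity on $(v_F,v_E]$.

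To control skewness I would introduce a third invariant $c(E)\coloneq\operatorname{ord}_E(\pi^*\phi_E)$, where $\phi_E$ is a curvette of $E$. A curvette is the image of a smooth curve germ transverse to $E$ at a general point, so $\operatorname{ord}_\mathfrak{m}(\phi_E)=b(E)$. Since $v_{\phi_E}\ge v_E$, Proposition~\ref{prop:Using_Skewness_to_Complute_Semivaluation} gives
\[
\alpha(v_E)=\frac{v_E(\phi_E)}{b(E)}=\frac{c(E)}{b(E)^2}.
\]
Next I would derive recursions for $c$ parallel to those for $(a,b)$. At a free point of $E'$ one should get $c(E)=c(E')+b(E')$, and at a satellite point $c(E)=c(E')+c(E'')$. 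These come from pulling back a curvette through $p$ in local coordinates, in the same way as the computation of $b$ in Lemma~\ref{lemma: Geometric_Meaning_of_Farey_Parameters}.

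I expect the satellite step to be the main obstacle, because that is where additivity of $c$ must be checked. The more robust route is to work with the whole segment. In local coordinates $g_1,g_2$ at $p=E'\cap E''$, the valuations on the segment between $v_{E'}$ and $v_{E''}$ are the pushforwards of monomial valuations $v_{s,t}$ with weights $(s,t)$. For these, $A_X(v_{s,t})=s\,a(E')+t\,a(E'')$, $v_{s,t}(\mathfrak{m})=s\,b(E')+t\,b(E'')$ and $v_{s,t}(\phi)=s\,c(E')+t\,c(E'')$, with $\phi$ a suitable curve through $p$. All three are linear in $(s,t)$, which gives the satellite recursions at once (take $s=t=1$).

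Along this segment the multiplicity is the constant $b(E'')$, taking $E'<E''$, by Lemma~\ref{lemma:Multipliticy_is_Constant_on_Segments} and its satellite analogue. The identity to check then reduces to an equality of fractional-linear functions of $s/t$. Differentiating, this amounts to the determinant relation
\[
a(E'')b(E')-a(E')b(E'')=b(E')c(E'')-b(E'')c(E')\quad\text{up to the factor } b(E'')\text{ normalization},
\]
a Farey-type unimodularity. I would prove it by the same induction, since both determinants are preserved under the mediant and free-point rules.

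The free case is simpler: $b$ is unchanged, $a$ increases by $1$, and $c$ increases by $b$. The change in $a/b$ is then $1/b$, while $m\,\Delta\alpha=b\cdot b/b^2=1/b$, so the identity holds. Combining both cases by induction gives $A^{\textup{Farey}}=A^{\textup{thin}}\circ\Phi$ on all of $\Gamma^*$.
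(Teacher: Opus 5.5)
There is a genuine gap. Your reduction to a one-step identity and the formula $\alpha(v_E)=c(E)/b(E)^2$ are fine, but both recursions you state for $c$ are wrong. The free case only appears to close because of an arithmetic slip: with $\Delta c=b$ you get $\Delta\alpha=1/b$, and then $m\,\Delta\alpha=b\cdot b/b^2=1$, not $1/b$.

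The correct free rule is $c(E)=c(E')+1$. A curvette of $E$ is itself a curvette of $E'$ through $p$, smooth there. Concretely, blow up a free point of $E_0$ to get $E_1$, then $E_0\cap E_1$ to get $E_2$. So $v_{E_2}$ is monomial with weights $(1,3/2)$, $(a,b)=(5,2)$, and $c(E_2)=\operatorname{ord}_{E_2}(y^2-x^3)=6$. For $E$ obtained at a free point of $E_2$, the true values are $c=7$, $\alpha=7/4$ and $A^{\textup{thin}}=3=a/b$. Your rule instead gives $c=8$, $\alpha=2$ and $A^{\textup{thin}}=7/2$.

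The satellite rule $c(E)=c(E')+c(E'')$ already fails for $E_2$: it predicts $c(E_2)=3$, i.e. $\alpha(v_{E_2})=3/4<1$. The linearity argument cannot produce it. The identity $v_{s,t}(\phi)=s\operatorname{ord}_{E'}(\phi)+t\operatorname{ord}_{E''}(\phi)$ holds only for one fixed $\phi$ whose strict transform misses $p$. Such a $\phi$ is in general not a curvette of both $E'$ and $E''$. At $s=t=1$ it computes $\operatorname{ord}_E(\phi)$, which is not $c(E)$ since $\phi$ is not a curvette of $E$. The true rule has a cross term: $c(E)=c(E')+c(E'')+2\operatorname{ord}_{E'}(\phi_{E''})+1$.

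Finally, the multiplicity on the segment between $E'$ and $E''$ is $m^{\textup{Farey}}(E'')$, not $b(E'')$. For $p=E_0\cap E_2$ that segment consists of monomial valuations in $x,y$, of multiplicity $1$, while $b(E_2)=2$.

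The fix is to drop $c$ and use, as the paper does, one irreducible test function per step.

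\emph{Free case.} Take $f$ with $v_f>v_E$ and $\operatorname{ord}_\mathfrak{m}(f)=m(v_E)=b(E)$. Its strict transform meets $E$ transversally, so $\operatorname{ord}_E(f)=\operatorname{ord}_{E'}(f)+1$. Proposition~\ref{prop:Using_Skewness_to_Complute_Semivaluation} then gives $m(v_E)\bigl(\alpha(v_E)-\alpha(v_{E'})\bigr)=1/b(E)$.

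\emph{Satellite case.} Take $f$ with $v_f>v_{E''}$ and $\operatorname{ord}_\mathfrak{m}(f)=m(v_{E''})$. Its strict transform misses $p$, so $\operatorname{ord}_E(f)=\operatorname{ord}_{E'}(f)+\operatorname{ord}_{E''}(f)$. The multiplicity is constant, equal to $m(v_{E''})$, on $(v_{E'},v_{E''}]$. Proposition~\ref{prop:Using_Skewness_to_Complute_Semivaluation} applied to this $f$ shows that $m(v_{E''})\,\alpha$ at the point of weight $(s,t)$ equals $\bigl(s\operatorname{ord}_{E'}(f)+t\operatorname{ord}_{E''}(f)\bigr)/\bigl(s\,b(E')+t\,b(E'')\bigr)$.

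Your linearity idea then finishes the argument. On the segment, $A^{\textup{thin}}$ and $\bigl(s\,a(E')+t\,a(E'')\bigr)/\bigl(s\,b(E')+t\,b(E'')\bigr)$ are linear forms in $(s,t)$ over the same denominator. They therefore agree everywhere once they agree at the two endpoints, which is the induction hypothesis, and in particular at $s=t=1$. This is exactly the paper's computation with the factor $b''/(b'+b'')$; no unimodularity identity is needed.
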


\begin{proof}
    Similar to the proof of Lemma~\ref{lemma: Geometric_Meaning_of_Farey_Parameters}, we prove by induction on the number of blow-ups. That is, pick $\pi\in\mathfrak{B}$ so that $\pi = \pi'\circ \varphi$, where $\pi'\colon Y\to \operatorname{Spec}R$ is a composition of $n$ blow-ups at closed points, and $\varphi\colon Y\to Y'$ is a single blow-up with the exceptional divisor $E$ at a closed point $p\in(\pi')^{-1}(\mathfrak{m})$. The base case $n=0$ is clear. There are two cases in the inductive step, and we will frequently invoke  Lemma~\ref{lemma:Multipliticy_is_Constant_on_Segments} and Lemma~\ref{lemma:Farey_Multiplicity_Equals_Usual_Multiplicity} without further mention.

    Assume that $p$ only belongs to a single exceptional divisor $E'$ in $Y'$, so $b(E) = b(E')$. Because of Lemma~\ref{lemma:Multipliticy_is_Constant_on_Segments}, the Farey multiplicity is constant on the segment $(E', E]$ with value $m^{\textup{Farey}}(E) = b(E)$. Pick $f\in\mathfrak{m}$ so that $V(f)>E$ and $m(v_E) = \operatorname{ord}_\mathfrak{m}(f)$. Hence, $\pi^{-1}_*(f)$ must intersect $E$ transversely, and we can see that $\operatorname{ord}_E(f) = \operatorname{ord}_{E'}(f) + 1$ by a local computation. Using Proposition~\ref{prop:Using_Skewness_to_Complute_Semivaluation}, we then compute
    \begin{align*}
        A^{\textup{thin}}(v_E) - A^{\textup{thin}}(v_{E'})
        &= m(v_E)\left(\alpha(v_E) - \alpha(v_{E'})\right) \\[0.5em]
        &= m(v_E)\left(\frac{v_E(f)}{\operatorname{ord}_\mathfrak{m}(f)} - \frac{v_{E'}(f)}{\operatorname{ord}_\mathfrak{m}(f)}\right) \\[0.5em]
        &= \frac{\operatorname{ord}_E(f)}{b(E)} - \frac{\operatorname{ord}_{E'}(f)}{b(E')} \\[0.5em]
        &= \frac{1}{b(E')} \\[0.5em]
        &= \frac{a(E')+1}{b(E')} - \frac{a(E')}{b(E')} \\[0.5em]
        &= \frac{a(E)}{b(E)} - \frac{a(E')}{b(E')} \\[0.5em]
        &= A^{\textup{Farey}}(E) - A^{\textup{Farey}}(E').
    \end{align*}
    As $A^{\textup{thin}}(v_{E'}) = A^{\textup{Farey}}(E')$ from the induction hypothesis, we see that $A^{\textup{thin}}(v_E) = A^{\textup{Farey}}(E)$.

    Now, assume that $p$ lies in the intersection of two exceptional components $E'$ and $E''$ in $Y'$, where $E'<E''$ in $\Gamma^*$. As a corollary of Lemma~\ref{lemma:Multipliticy_is_Constant_on_Segments}, the Farey multiplicity is constant on the segment $(E', E'']$ with the value $m^{\textup{Farey}}(E'') = m^{\textup{Farey}}(E)$.
    Let $f\in\mathfrak{m}$ such that $m(v_{E''}) = \operatorname{ord}_\mathfrak{m}(f)$.
    We write $(a,b)$ (resp.\ $(a',b')$ and $(a'',b'')$) for the Farey parameter of $E$ (resp.\ $E'$ and $E''$).
    Leveraging Proposition~\ref{prop:Using_Skewness_to_Complute_Semivaluation} again, we obtain
    \begin{align*}
        A^{\operatorname{thin}}(v_E) - A^{\operatorname{thin}}(v_{E'})
        &= m(v_E)(\alpha(v_E)-\alpha(v_{E'})) \\[0.5em]
        &= m(v_E)\left(\frac{v_E(f)}{\operatorname{ord}_\mathfrak{m}(f)} - \frac{v_{E'}(f)}{\operatorname{ord}_\mathfrak{m}(f)}\right) \\[0.5em]
        &= \frac{\operatorname{ord}_E(f)}{b} - \frac{\operatorname{ord}_{E'}(f)}{b'} \\[0.5em]
        &= \frac{\operatorname{ord}_{E'}(f)+\operatorname{ord}_{E''}(f)}{b'+b''} - \frac{\operatorname{ord}_{E'}(f)}{b'} \\[0.5em]
        &= \frac{b'\operatorname{ord}_{E''}(f) - b''\operatorname{ord}_{E'}(f)}{b'(b'+b'')} \\[0.5em]
        &= \frac{b''}{b'+b''}\left(\frac{\operatorname{ord}_{E''}(f)}{b''}-\frac{\operatorname{ord}_{E'}(f)}{b'}\right) \\[0.5em]
        &= \frac{b''}{b'+b''}\left(A^{\textup{thin}}(v_{E''})-A^{\textup{thin}}(v_{E'})\right).
    \end{align*}
    On the other hand, we see that
    \begin{align*}
        A^{\textup{Farey}}(E) - A^{\textup{Farey}}(E')
        &= \frac{a}{b} - \frac{a'}{b'} \\[0.5em]
        &= \frac{a'+a''}{b'+b''} - \frac{a'}{b'} \\[0.5em]
        &= \frac{a''b'-a'b''}{b'(b'+b'')} \\[0.5em]
        &= \frac{b''}{b'+b''}\left(\frac{a''}{b''}-\frac{a'}{b'}\right) \\[0.5em]
        &= \frac{b''}{b'+b''}\left(A^{\textup{Farey}}(E'')-A^{\textup{Farey}}(E')\right).
    \end{align*}
    The induction hypothesis on the number of blow-ups says that $A^{\textup{thin}}(v_{E'}) = A^{\textup{Farey}}(E')$ and $A^{\textup{thin}}(v_{E''}) = A^{\textup{Farey}}(E'')$, so we get $A^{\textup{thin}}(v_{E}) = A^{\textup{Farey}}(E)$ as desired.
\end{proof}

\begin{proof}[Proof of Theorem~\ref{thm: Isom_between_Gamma_and_Vm}]
    Lemma~\ref{lemma: Isom_between_Gamma_*_and_V_div}, and Theorem~\ref{thm: A_Farey_Equals_A_Thinness} imply that the map $\Phi\colon(\Gamma^*,A^{\textup{Farey}})\to(\mathcal{V}_\textup{div},A^{\textup{thin}})$ is an isomorphism of para\-metrized non-metric $\mathbb{Q}$-trees and preserves multiplicities. Therefore, $\Phi$ extends to an isomorphism $\Phi\colon (\Gamma, A^{\textup{Farey}})\to (\mathcal{V}_\mathfrak{m}, A^{\textup{thin}})$ of parametrized non-metric $\mathbb{R}$-trees (we still write the extension of the Farey parameter as $A^{\textup{Farey}}$).

    Note that $m^{\textup{Farey}}$ and $m$ are integral-valued, and lower-semicontinuous on $\Gamma$ and on $\mathcal{V}_\mathfrak{m}$ respectively, and that $m^{\textup{Farey}} = m\circ \Phi$ on $\Gamma^*$. It follows that $m^{\textup{Farey}} = m\circ \Phi$ on $\Gamma$ as desired.
\end{proof}


\section{Newton Polyhedra of Formal Power Series}
\label{section: Newton_Polytope}

\begin{definition}[Newton Polyhedra]
Let $A = \Bbbk[x_1,x_2,\dots,x_n]$. Recall the notation $x^\mathbf{u} = x_1^{u_1}x_2^{u_2}\dots x_n^{u_n}$, where $\textbf{u}=(u_1,u_2,\dots,u_n)\in \mathbb{Z}^n_{\geq 0}$.
\begin{itemize}
    \item For $f=\sum_\mathbf{u}c_\mathbf{u}x^\mathbf{u}\in A$, the \emph{support} of $f$ is the set $\textup{Supp}(f) = \left\{ u\in\mathbb{Z}^n_{\geq 0}\,|\, c_\mathbf{u}\neq 0 \right\}$.
    \item For $f\in A$, the \emph{monomial ideal of $f$} is the ideal 
    \[
    \textup{Monom}(f) = \left( x^\mathbf{u} \mid \mathbf{u}\in\textup{Supp}(f) \right)
    \]
    of $A$. That is, it is the ideal generated by the monomials appearing in $f$.
    \item If $\mathfrak{a}$ is a monomial ideal, which means that $\mathfrak{a}$ can be generated by monomials, then the \emph{Newton polyhedron of $\mathfrak{a}$}, denoted by $\operatorname{Newt}(\mathfrak{a})$, is defined to be the convex hull of the subset $\left\{\mathbf{u}\in\mathbb{Z}^n_{\geq 0}\mid x^\mathbf{u}\in\mathfrak{a}\right\}$ of $\mathbb{R}^n_{\geq 0}$.
    \item For $f\in A$, we define the \emph{Newton polyhedron of $f$} to be $\operatorname{Newt}(\operatorname{Monom}(f))$.
\end{itemize}
The above definitions remain valid if $A = \Bbbk[[x_1,x_2,\dots,x_n]]$.
\end{definition}

\begin{theorem}
    \label{thm:Howald2001}
    If $\mathfrak{a}\subseteq\mathbb{\Bbbk}[x_1,x_2,\cdots,x_n]$ is a monomial ideal,
    then
    \[
    \operatorname{lct}_\mathfrak{m}(\mathfrak{a}) = \max\{ \lambda\in\mathbb{R}_{\geq 0}\mid (1,1,\dots,1)\in \lambda\cdot\operatorname{Newt}(\mathfrak{a}) \}.
    \]
    The same result holds if $\mathfrak{a}\subseteq\mathbb{\Bbbk}[[x_1,x_2,\cdots,x_n]]$ is a monomial ideal.
\end{theorem}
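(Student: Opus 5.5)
We prove Howald's formula in arbitrary characteristic using toric geometry, so that no resolution-of-singularities input beyond the combinatorial one is needed. The plan has three steps: a lower bound via monomial valuations, an upper bound via a toric log resolution, and a short reduction from the power series case to the polynomial case.

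\textbf{Lower bound.} For each weight $\mathbf{w}\in\mathbb{Z}_{>0}^n$, the monomial valuation $v_{\mathbf{w}}$ is divisorial. It is realized by the toric exceptional divisor $E_{\mathbf{w}}$ of the weighted blow-up, or more generally of a toric modification whose fan contains the ray $\mathbb{R}_{\geq0}\mathbf{w}$ with $\mathbf{w}$ primitive. Its log discrepancy is $A(E_{\mathbf{w}}) = w_1+\cdots+w_n$, because the toric canonical divisor is $-\sum D_\rho$ and $K_{\mathbb{A}^n} = -\sum D_i$. This computation is purely combinatorial: the sheaf of log differentials $\Omega^n(\log \partial)$ is trivialized by $\frac{dx_1}{x_1}\wedge\cdots\wedge\frac{dx_n}{x_n}$ on any toric variety, in any characteristic. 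Moreover, $\operatorname{ord}_{E_{\mathbf{w}}}(\mathfrak{a}) = \min\{\mathbf{w}\cdot\mathbf{u}\mid \mathbf{u}\in\operatorname{Newt}(\mathfrak{a})\}$. The definition of lct as an infimum therefore gives
\[
\operatorname{lct}_\mathfrak{m}(\mathfrak{a}) \le \inf_{\mathbf{w}} \frac{\sum_i w_i}{\min_{\mathbf{u}\in\operatorname{Newt}(\mathfrak{a})}\mathbf{w}\cdot\mathbf{u}}.
\]
By LP duality, that is, by describing the convex set $\operatorname{Newt}(\mathfrak{a})$ through its supporting half-spaces $\mathbf{w}\cdot\mathbf{u}\ge c$, this infimum equals $\max\{\lambda \mid (1,\dots,1)\in\lambda\operatorname{Newt}(\mathfrak{a})\}$. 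The infimum is attained at the normal vectors of facets, which are rational, so restricting to integral weights loses nothing.

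\textbf{Upper bound.} The reverse inequality needs a log resolution whose divisors are all toric. Take a regular subdivision $\Sigma$ of the positive orthant refining the normal fan of $\operatorname{Newt}(\mathfrak{a})$. Such a subdivision exists by the combinatorial toric resolution algorithm, which is characteristic free. Let $\pi\colon X_\Sigma\to\mathbb{A}^n$ be the associated toric morphism. Because $\Sigma$ refines the normal fan, $\mathfrak{a}\cdot\mathcal{O}_{X_\Sigma}$ is locally principal, generated on each chart by a single monomial. Because the fan is regular, $X_\Sigma$ is smooth and the torus-invariant boundary is SNC. Hence $\pi$ is a log resolution of $(\mathbb{A}^n, V(\mathfrak{a}))$ whose relevant divisors are all toric $E_{\mathbf{w}}$. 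The LCT is computed on this resolution, taking the minimum over the divisors lying over the origin; the strict transforms of the coordinate hyperplanes are harmless, since they have $A=1$ and order $\le$ the corresponding value. The minimum over the rays of $\Sigma$ is at least the infimum over all weights, so the two bounds agree. The support condition $c_X\in W=\{o\}$ amounts to $\mathbf{w}\in\mathbb{Z}_{>0}^n$, and one checks that the minimizing facet normals can be taken among these. If some facet normal has zero entries, the corresponding non-centered divisor gives a threshold that is also approached by centered ones, since the ratio is continuous in $\mathbf{w}$.

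\textbf{Power series case.} The same fan defines a toric modification of $\operatorname{Spec}\Bbbk[[x_1,\dots,x_n]]$ by base change, and all the computations above are local and monomial, so they carry over verbatim. Alternatively, a monomial ideal is generated by finitely many monomials (Dickson's lemma), and the formation of the blow-up commutes with the flat base change $\Bbbk[x]\to\Bbbk[[x]]$.

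\textbf{Main obstacle.} I expect the main care to be needed in the claim that this toric modification really is a log resolution computing the LCT, with the sharp values $A(E_{\mathbf{w}})=|\mathbf{w}|$ in positive characteristic. I would handle this through the log-differential trivialization together with the transformation rule for relative canonical divisors of \cite{deFernex&Ein&Mustata2011}, as in Lemma~\ref{lemma: Geometric_Meaning_of_Farey_Parameters}.
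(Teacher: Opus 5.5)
Your proposal is correct and follows the route the paper itself indicates. The paper gives no proof of its own: it defers to Musta\c{t}\u{a}'s characteristic-free toric-resolution argument (regular refinement of the normal fan, $A(E_{\mathbf{w}})=|\mathbf{w}|$, $\operatorname{ord}_{E_{\mathbf{w}}}(\mathfrak{a})=\min_{\operatorname{Newt}(\mathfrak{a})}\mathbf{w}\cdot\mathbf{u}$, then convex duality) and passes to $\Bbbk[[x_1,\dots,x_n]]$ via the correspondence of monomial ideals, which is exactly the argument you spell out. The one issue is cosmetic: your ``Lower bound'' and ``Upper bound'' headings are swapped relative to $\operatorname{lct}$, since the first step gives $\operatorname{lct}_\mathfrak{m}(\mathfrak{a})\le\lambda_{\max}$ and the second gives $\ge$.
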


\begin{remark}
    In the original paper \cite{Howald2001}, Theorem \ref{thm:Howald2001} was proven only for the case $\mathfrak{a}\subseteq\mathbb{\mathbb{C}}[x_1,x_2,\cdots,x_n]$. In \cite{Mustata2012}, a proof involving toric resolution is given, and this method works for any base field $\Bbbk$. On the other hand, the monomial ideals in ${\Bbbk}[x_1,x_2,\cdots,x_n]$ and the ones in ${\Bbbk}[[x_1,x_2,\cdots,x_n]]$ are in canonical bijection, so the result extends to the setting of formal power series.
\end{remark}


\begin{remark}[Normalized SKPs]
    Given an SKP $[(U_j)_0^k;(\tilde{\beta})_0^k]$, if $\tilde{\beta}_0 >\tilde{\beta_1}$ then we can exchange $x$ and $y$ and assume that $\tilde{\beta}_0 \leq \tilde{\beta_1}$. Next, if $U_2$ is of the form $y-\theta_1 x$ for some $\theta_1\in \Bbbk^*$, then $\Bbbk[[x,y]] = \Bbbk[[x,U_2]]$ and thus we can replace $y$ by $U_2$. After having done so, the new SKP will have $U_2$ of the form $y^{n_1}-\theta_1x^{m_{1,0}}$ for some $\theta_1\in\Bbbk^*$ and some $m_{1,0}>1$.

    Therefore, we say that an SKP $[(U_j)_0^k;(\tilde{\beta})_0^k]$ is \emph{normalized} if $1=\tilde{\beta}_0 < \tilde{\beta_1}$.
\end{remark}

\begin{lemma}
    \label{lemma:Monomial_Valuation_of_U_j}
    Let $[(U_j)_0^k;(\tilde{\beta}_j)_0^k]$ be a normalized SKP with $k\geq 2$, and write $v_{1,\tilde{\beta}_1}$ for the monomial valuation sending $x$ to 1 and $y$ to $\tilde{\beta}_1$. For $j\geq 2$, we have
    \[
    v_{1,\tilde{\beta}_1}(U_0^{m_{j,0}}\cdots U_{j-1}^{m_{j,j-1}}) > \tilde{\beta}_1 d_{j+1},
    \]
    where $d_{j+1} \coloneq \deg_yU_{j+1} = n_1n_2\cdots n_{j-1}n_j$.\footnote{If $k< \infty$ and $j=k$, then even thought $U_{k+1}$ has not been determined yet, its degree $\deg_y U_{k+1}$ must be $n_1n_2\cdots n_{k-1}n_k$. As a result, the notation $d_{k+1}$ still makes sense.} 
\end{lemma}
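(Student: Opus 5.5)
The idea is to reduce the inequality to the strict monotonicity of $\tilde\beta_l/\deg_y U_l$ from Lemma~\ref{lemma: First_Prop_of_SKP}, once the values $v_{1,\tilde\beta_1}(U_l)$ are known. Throughout write $v \coloneq v_{1,\tilde\beta_1}$ and $d_l \coloneq \deg_y U_l = n_1\cdots n_{l-1}$ for $l\geq 1$, so $d_1 = 1$.

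\textbf{Step 1: $v(U_l) = \tilde\beta_1 d_l$ for $1\leq l\leq j$.} I would prove this jointly with the lemma, by induction on $l$.
\begin{itemize}
\item For $l=1$ it is trivial, since $U_1 = y$.
\item For $l=2$ we have $U_2 = y^{n_1} - \theta_1 x^{m_{1,0}}$ with $n_1\tilde\beta_1 = m_{1,0}\tilde\beta_0 = m_{1,0}$. Both monomials then have $v$-value $n_1\tilde\beta_1 = \tilde\beta_1 d_2$, and they cannot cancel, so $v(U_2) = \tilde\beta_1 d_2$.
\item For the inductive step $l\to l+1$ with $l\geq 2$, write $U_{l+1} = U_l^{n_l} - \theta_l U_0^{m_{l,0}}\cdots U_{l-1}^{m_{l,l-1}}$. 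The first term has $v$-value $n_l\tilde\beta_1 d_l = \tilde\beta_1 d_{l+1}$. The second term has $v$-value strictly larger, by the statement of the lemma at index $l<j$.
\item Hence $v(U_{l+1}) = \tilde\beta_1 d_{l+1}$, because the minimum is attained only by the first summand.
\end{itemize}
The lemma at index $l$ uses only the values $v(U_i)$ for $i\leq l-1$, so the induction is well founded.

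\textbf{Step 2: the key inequality.} Set $e_0 \coloneq 1/\tilde\beta_1$ and $e_l \coloneq d_l$ for $l\geq 1$. Then $v(U_l) = \tilde\beta_1 e_l$ for all $0\leq l\leq j-1$, since $v(x)=1$. Put $r_l \coloneq \tilde\beta_l/e_l$, so that $r_0 = \tilde\beta_1 = r_1$. By Lemma~\ref{lemma: First_Prop_of_SKP}, $r_1 < r_2 < \cdots < r_j$ strictly. Relation \ref{(S2):SKP_relations} reads
\[
r_j\, n_j d_j = n_j\tilde\beta_j = \sum_{l=0}^{j-1} m_{j,l}\tilde\beta_l = \sum_{l=0}^{j-1} m_{j,l} e_l r_l \leq r_{j-1}\sum_{l=0}^{j-1} m_{j,l} e_l .
\]
Since $j\geq 2$, we have $r_{j-1} < r_j$. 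The left side is positive, so the sum is positive, and therefore
\[
\sum_{l} m_{j,l}e_l \geq \frac{r_j}{r_{j-1}}\, n_jd_j > n_jd_j = d_{j+1}.
\]
Multiplying by $\tilde\beta_1$ and using $v(U_0^{m_{j,0}}\cdots U_{j-1}^{m_{j,j-1}}) = \tilde\beta_1\sum_l m_{j,l}e_l$ gives the claim.

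\textbf{Main obstacle.} The delicate point is Step 1, that is, making sure that $v(U_l)$ really equals $\tilde\beta_1 d_l$ and does not jump up through cancellation. This is handled by ordering the induction so that the strict inequality at earlier indices guarantees the leading term $y^{d_{l+1}}$ survives. The case $j=2$ needs $r_1<r_2$, which holds since $\tilde\beta_2 > n_1\tilde\beta_1$.
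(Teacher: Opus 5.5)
Your proof is correct. Step 2 is the paper's argument in more elementary form, while Step 1 takes a genuinely different route.

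\textbf{Step 2.} The paper minimizes $m_{j,0}+\tilde\beta_1\sum_{l\ge1}m_{j,l}d_l$ as a linear program over the simplex cut out by relation (S2). It places the minimum at the vertex $n_j\tilde\beta_j\tilde\beta_{j-1}^{-1}\mathbf{e}_{j-1}$, which is exactly your bound $\tilde\beta_1(r_j/r_{j-1})n_jd_j$. Your estimate $\sum_l m_{j,l}e_lr_l\le r_{j-1}\sum_l m_{j,l}e_l$ is the content of that vertex argument. Your choice $e_0=1/\tilde\beta_1$, giving $r_0=r_1$, also treats the $x$-term explicitly. The paper leaves that term implicit: it appeals to monotonicity of $\tilde\beta_l/d_l$ with the footnote convention $\tilde\beta_0/\deg_yU_0=0$, which does not match the cost coefficient $1$ of the $l=0$ vertex.

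\textbf{Step 1.} The paper gets $v_{1,\tilde\beta_1}(U_l)=\tilde\beta_1d_l$ in one line from Proposition~\ref{prop:Using_Skewness_to_Complute_Semivaluation}. That is quick given the valuative-tree theory already set up. However, it tacitly uses that $v_{1,\tilde\beta_1}\le v_{U_l}$ in $\mathcal{V}$ and that $\operatorname{ord}_\mathfrak{m}(U_l)=d_l$, both of which rest on the Favre--Jonsson correspondence between SKPs and semivaluations. Your simultaneous induction avoids this: the lemma at index $l$ makes $U_l^{n_l}$ the strictly dominant term of $U_{l+1}$. It needs only multiplicativity and the strict ultrametric inequality for a monomial valuation, and the induction is well founded. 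This makes the lemma, and hence Theorem~\ref{thm:Newton_Polyhedron_of_U_j}, purely combinatorial. It is the same domination argument the paper later runs at the level of Newton polyhedra.

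One cosmetic point: positivity of $\sum_l m_{j,l}e_l$ is not needed; dividing by $r_{j-1}>0$ suffices.
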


\begin{proof}
    Recall the skewness $\alpha$ in Subsection~\ref{subsection: Parametrizations_on_V}. By Proposition~\ref{prop:Using_Skewness_to_Complute_Semivaluation}, for $l>0$ we see that
    \[
    v_{1,\tilde{\beta}_1}(U_l) = \alpha(v_{1,\tilde{\beta}_1}\wedge v_{U_l})v_\mathfrak{m}(U_l) = \alpha(v_{1,\tilde\beta_1})\deg_yU_l = \tilde\beta_1 d_l.
    \]
    Now for $j\geq 2$, we can compute
    \begin{align*}
    v_{1,\tilde{\beta}_1}(U_0^{m_{j,0}}\cdots U_{j-1}^{m_{j,j-1}})
    &= \sum_{l=0}^{j-1}m_{j,l}\,v_{1,\tilde{\beta}_1}(U_l)\\
    &= m_{j,0} + m_{j,1}\tilde\beta_1 d_1 + m_{j,2}\tilde\beta_1 d_2 + \cdots + m_{j,j-1}\tilde\beta_1 d_{j-1}.
    \end{align*}
    Therefore, to prove the lemma, it suffices to solve for $(m_{j,0},m_{j,1},\dots,m_{j,j-1})\in\mathbb{R}^j_{\geq 0}$ in the following optimization problem
    \begin{align*}
    \begin{array}{ll}
    \min &\displaystyle{m_{j_0}+\tilde{\beta}_1\sum_{l=1}^{j-1} m_{j,l} d_l} \\
    \textup{subject to}
    &n_j\tilde{\beta}_j = \displaystyle{\sum_{l=0}^{j-1}m_{j,l}\tilde{\beta}_l} \\
    &m_{j,l} \geq 0 \,\mbox{ for }\, l=0,\dots,j-1,
    \end{array}
    \end{align*}
    where the constraint is from \ref{(S2):SKP_relations} of the definition of an SKP, and to show that the minimum is still greater than $\tilde{\beta}_1 d_{j+1}$.

    This is a linear programming problem, so we know that the minimum is on a vertex of the simplex determined by the constrains. Each such vertex is of the form
    \[
    (m_{j,0},m_{j,1},\dots,m_{j,j-1}) = n_j \tilde{\beta}_j\tilde{\beta}_l^{-1} \textbf{e}_l,
    \]
    where $\textbf{e}_l\in\mathbb{R}^j$ is the $l$-th standard basis vector. In view of Lemma~\ref{lemma: First_Prop_of_SKP}, the sequence $(\tilde{\beta}_l/d_l)$ is strictly increasing, so the vertex $(0,0,\dots,0,n_j\tilde{\beta}_j\tilde{\beta}_{j-1}^{-1})$ gives the minimum $\tilde{\beta}_1 n_j\tilde{\beta}_j\tilde{\beta}_{j-1}^{-1}d_{j-1}$. Using the relation $\tilde{\beta}_j > n_{j-1}\tilde{\beta}_{j-1}$, we see the desired inequality
    \[
    \tilde{\beta}_1 n_j\tilde{\beta}_j\tilde{\beta}_{j-1}^{-1}d_{j-1}
    > \tilde{\beta}_1 n_jn_{j-1}d_{j-1} = \tilde{\beta}_1 d_{j+1}.
    \]
\end{proof}

\begin{theorem}
    \label{thm:Newton_Polyhedron_of_U_j}
    Let $[(U_j)_0^k;(\tilde{\beta}_j)_0^k]$ be an SKP with $2\leq k\leq \infty$ such that $\tilde{\beta}_0 < \tilde{\beta}_1$.
    For all $2\leq j \leq k$, we have the following equality
    \[
    \operatorname{Newt}(U_j) = \frac{\deg_y U_j}{n_1}\cdot \operatorname{Newt}(U_2).
    \]
    Here, $\deg_y U_\infty$ only makes sense when $U_\infty$ is defined.
\end{theorem}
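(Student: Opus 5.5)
We argue by induction on $j$, keeping track of the Newton polyhedron through the recursion \ref{(S3):Next_SKP}. First we identify $\operatorname{Newt}(U_2)$. Since the SKP has $\tilde\beta_0<\tilde\beta_1$, we may normalize so that $\tilde\beta_0=1$. Then
\[
U_2 = y^{n_1}-\theta_1 x^{m_{1,0}} \quad\text{with } n_1\tilde\beta_1 = m_{1,0}.
\]
So $\operatorname{Newt}(U_2)$ is the region of $\mathbb{R}^2_{\geq0}$ lying on or above the segment joining $(m_{1,0},0)$ and $(0,n_1)$. Equivalently, it is $\{\mathbf u\in\mathbb{R}^2_{\ge0}\mid u_1+\tilde\beta_1u_2\ge n_1\tilde\beta_1\}$. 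Scaling by $d_j/n_1$, where $d_j=\deg_yU_j$, turns this into
\[
P_j:=\{\mathbf u\mid u_1+\tilde\beta_1 u_2\ge \tilde\beta_1 d_j\}.
\]
This is the polyhedron with vertices $(\tilde\beta_1 d_j,0)$ and $(0,d_j)$. So the claim is equivalent to two statements: (i) $v_{1,\tilde\beta_1}(U_j)=\tilde\beta_1 d_j$, and (ii) both monomials $y^{d_j}$ and $x^{\tilde\beta_1 d_j}$ occur in $U_j$ with nonzero coefficient. Statement (ii) requires $\tilde\beta_1 d_j\in\mathbb{Z}$, which holds because $n_1\mid d_j$ and $n_1\tilde\beta_1=m_{1,0}$. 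Note that $\operatorname{Newt}(U_j)\subseteq P_j$ is exactly $v_{1,\tilde\beta_1}(U_j)\ge\tilde\beta_1d_j$. Equality then needs the two vertices to lie in the support, since a convex hull of lattice points in $P_j$ that contains both vertices of $P_j$ (together with the recession cone $\mathbb{R}^2_{\geq 0}$) is all of $P_j$.

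Statement (i) is already in the proof of Lemma~\ref{lemma:Monomial_Valuation_of_U_j}, via Proposition~\ref{prop:Using_Skewness_to_Complute_Semivaluation}. Alternatively it follows from the same induction below. For (ii) we induct on $j$, the base case $j=2$ being clear. Suppose $U_j$ contains $y^{d_j}$ and $x^{\tilde\beta_1 d_j}$, and that all its monomials satisfy $u_1+\tilde\beta_1u_2\ge\tilde\beta_1 d_j$. Then the initial form of $U_j$ with respect to the weight $(1,\tilde\beta_1)$ is
\[
\operatorname{in}(U_j)=\operatorname{in}(U_2)^{d_j/n_1} = \bigl(y^{n_1}-\theta_1x^{m_{1,0}}\bigr)^{d_j/n_1}
\]
up to a unit coefficient. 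Rather than carrying only the two vertices, we strengthen the inductive hypothesis to this statement about the full initial form. Now
\[
U_{j+1}=U_j^{n_j}-\theta_j U_0^{m_{j,0}}\cdots U_{j-1}^{m_{j,j-1}}.
\]
The first term has initial form $\operatorname{in}(U_2)^{d_{j+1}/n_1}$, which is of weight $\tilde\beta_1 d_{j+1}$. By Lemma~\ref{lemma:Monomial_Valuation_of_U_j} (valid for $j\ge2$), the second term has $v_{1,\tilde\beta_1}$-value strictly larger than $\tilde\beta_1 d_{j+1}$. Hence the initial form of $U_{j+1}$ is still $\operatorname{in}(U_2)^{d_{j+1}/n_1}$.

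It remains to check that this initial form really contains the extreme monomials $y^{d_{j+1}}$ and $x^{\tilde\beta_1 d_{j+1}}$. In characteristic $p$, a power like $(y^{n_1}-\theta_1x^{m_{1,0}})^{N}$ can lose middle terms, but the two extreme terms $y^{n_1N}$ and $(-\theta_1)^N x^{m_{1,0}N}$ always survive with nonzero coefficients. Since these extremes are all we need, characteristic causes no trouble. Consequently $\operatorname{Newt}(U_{j+1})=P_{j+1}=\frac{d_{j+1}}{n_1}\operatorname{Newt}(U_2)$.

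For $k=\infty$ with $U_\infty$ defined, we pass to the limit. Here $n_j=1$ eventually, so $d_j$ stabilizes at $d_\infty=\deg_yU_\infty$. Lemma~\ref{lemma:Monomial_Valuation_of_U_j} shows that each correction term has weight $>\tilde\beta_1 d_\infty$, so the initial forms stay equal along the sequence. Formal convergence then preserves the initial form, because coefficients stabilize degree by degree. The main obstacle I expect is this strict inequality from Lemma~\ref{lemma:Monomial_Valuation_of_U_j} and its uniformity in the limit. At the algebraic level, the other point to get right is the bookkeeping that $\operatorname{in}(U_j)$ is a power of $\operatorname{in}(U_2)$, rather than just the vertex claim.
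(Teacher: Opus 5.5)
Your proof is correct and is essentially the paper's argument: induction on $j$, with Lemma~\ref{lemma:Monomial_Valuation_of_U_j} placing the correction term $\theta_j U_0^{m_{j,0}}\cdots U_{j-1}^{m_{j,j-1}}$ strictly above weight $\tilde\beta_1 d_{j+1}$, so that the extreme part of $U_j^{n_j}$ survives in $U_{j+1}$. Tracking initial forms for the weight $(1,\tilde\beta_1)$, rather than using $\operatorname{Newt}(U_j^{n_j}) = n_j\operatorname{Newt}(U_j)$ and the sandwich of inclusions, is only a change of language. It does spell out a bit more carefully two points the paper passes over quickly: why the vertices survive in every characteristic, and why the Newton polyhedron passes to the limit $U_\infty$.
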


\begin{proof}
    Without loss of generality, we may assume $\tilde{\beta}_0=1$ to make the SKP normalized. We write $d_j=\deg_y U_j$, and prove the theorem by induction on $j<\infty$; it is true for $j=2$ because $d_2 = n_1$, so we assume that $j>2$ and that the theorem is true for smaller $j$'s.

    Write $U_0^{m_{j,0}}\cdots U_{j-1}^{m_{j,j-1}}=\sum c_{ij}x^iy^j$. Lemma~\ref{lemma:Monomial_Valuation_of_U_j} indicates that
    \[
    i+\tilde{\beta}_1j = v_{1,\tilde{\beta}_1}(x^iy^j) \geq v_{1,\tilde{\beta}_1}\left( \sum c_{ij}x^iy^j \right) > \tilde{\beta}_1 d_{j+1},
    \]
    which is equivalent to
    \[
    \frac{n_1}{d_{j+1}}\left(\frac{i}{m_{1,0}} + \frac{j}{n_1}\right) > 1
    \]
    due to the relation $n_1\tilde{\beta}_1 = m_{1,0}\tilde{\beta}_0 = m_{1,0}$.
    Since
    \begin{align}
    \label{equation:Discription_of_Newt(U_2)}
    \operatorname{Newt}(U_2) = \left\{ (u_1,u_2)\in\mathbb{R}^2_{\geq 0}\,\middle|\, \frac{u_1}{m_{1,0}}+\frac{u_2}{n_1}\geq 1\right\},
    \end{align}
    the indicated inequality implies that the point $(i,j)\in\mathbb{R}^2_{\geq 0}$ lies in the interior of the set $(d_{j+1}/n_1)\cdot\operatorname{Newt}(U_2)$.
    By the induction hypothesis, we also have
    \[
    \operatorname{Newt}(U_j^{n_j}) = n_j\cdot\operatorname{Newt}(U_j) = n_j\cdot\frac{d_j}{n_1}\cdot\operatorname{Newt(U_2)} = \frac{d_{j+1}}{n_1}\cdot\operatorname{Newt}(U_2),
    \]
    so we can conclude that 
    \begin{align*}
    \operatorname{Newt}(U_{j+1})
    &= \operatorname{Newt}(U_j^{n_j}-\theta_jU_0^{m_{j,0}}\cdots U_{j-1}^{m_{j,j-1}}) \\
    &\subseteq \frac{d_{j+1}}{n_1}\cdot\operatorname{Newt}(U_2) \\
    &= \operatorname{Newt}(U_j^{n_j}) \\
    &\subseteq \operatorname{Newt}(U_{j+1}).
    \end{align*}
    Finally, assume $j = \infty$ in the case that $U_\infty$ is defined. Since $U_j\to U_\infty$ and $d_\infty=d_{j_0}$ for some $j_0$ large enough, we see that 
    \[
    \operatorname{Newt}(U_\infty) = \operatorname{Newt}(U_{j_0})
    = \frac{d_{j_{j_0}}}{n_1}\cdot\operatorname{Newt}(U_2) 
    = \frac{d_{\infty}}{n_1}\cdot\operatorname{Newt}(U_2)
    \]
    as desired.
\end{proof}

\begin{remark}
    \label{remark:Newton_Polyhedron_of_General_U_j}
    We have seen that (\ref{equation:Discription_of_Newt(U_2)}) gives an explicit description of $\operatorname{Newt}(U_2)$, so combining Theorem~\ref{thm:Newton_Polyhedron_of_U_j} we deduce that, for $j\geq 2$,
    \[
    \operatorname{Newt}(U_j) = \left\{ (u_1,u_2)\in\mathbb{R}^2_{\geq 0}\,\middle|\, \frac{n_1}{\deg_yU_j}\left(\frac{u_1}{m_{1,0}}+\frac{u_2}{n_1}\right)\geq 1\right\}.
    \]
    In view of the relation $n_1\tilde{\beta}_1 = m_{1,0}\tilde{\beta}_0$ and Theorem~\ref{thm:Howald2001}, we can conclude that
    \[
    \operatorname{lct}_o\left(\operatorname{Monom}(U_k)\right) = \frac{n_1}{\deg_y U_k}\left(\frac{1}{n_1}+\frac{1}{m_{1,0}}\right)
    =\frac{1}{\deg_y U_k}\left( 1 + \frac{\tilde{\beta}_0}{\tilde{\beta}_1} \right).
    \]
\end{remark}


\section{The Main Theorem}
\label{section: Main_Theorem}
\begin{theorem}
\label{thm:LCT_Formula}
Let ${f}\in\Bbbk[[x,y]]$ be irreducible, and assume that the tangent cone of $V({f})$ is defined by the equation $y^{\operatorname{ord}_\mathfrak{m}({f})} = 0$. Write $v_{{f}}$ for the corresponding non-normalized curve semivaluation. Then, we have
\[
\operatorname{lct}_\mathfrak{m}({f}) = \frac{1}{v_{{f}}(x)}+\frac{1}{v_{f}(y)}.
\]
\end{theorem}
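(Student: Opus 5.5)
The proof splits into two inequalities. The upper bound $\operatorname{lct}_\mathfrak{m}(f)\le \frac{1}{v_f(x)}+\frac{1}{v_f(y)}$ will come from the Newton polyhedron (Section~\ref{section: Newton_Polytope}). The lower bound will come from the valuative description of $A_X(E)$ and $\operatorname{ord}_E$ (Section~\ref{section: Valuative_Tree}).

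\textbf{Setup.} Let $[(U_j)_0^k;(\tilde\beta_j)_0^k]$ be the SKP of $v_f$, with $U_k=f$ up to a unit, $\tilde\beta_0=1$, and $d:=\deg_yU_k=\operatorname{ord}_\mathfrak{m}(f)$. Since the tangent cone is $y^d=0$, $x$ is transverse to $C$, so $v_f(x)=d$. The second intersection number should be $v_f(y)=d\tilde\beta_1$; I would verify this via Proposition~\ref{prop:Using_Skewness_to_Complute_Semivaluation} applied to $v=v_{1,\tilde\beta_1}$.

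I must assume that $y$ is chosen so that $n_1\ge 2$, so that $\tilde\beta_1$ is the first Puiseux exponent. This means absorbing any $U_2=y-\theta x^{m}$ into $y$, as in the normalization remark. This is genuinely needed: for $f=(y-x^2)^2-x^5$ we have $v_f(x)=2$ and $v_f(y)=4$, yet $\operatorname{lct}=\frac12+\frac15$. So I would state this convention explicitly. With it, the target value is $c:=\frac1d\bigl(1+\frac1{\tilde\beta_1}\bigr)$.

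\textbf{Upper bound.} Since $f\in\operatorname{Monom}(f)$, we have $\operatorname{lct}(f)\le\operatorname{lct}(\operatorname{Monom}(f))$. Theorem~\ref{thm:Newton_Polyhedron_of_U_j} and Remark~\ref{remark:Newton_Polyhedron_of_General_U_j}, together with Theorem~\ref{thm:Howald2001}, give $\operatorname{lct}(\operatorname{Monom}(U_k))=\frac1d(1+\tilde\beta_0/\tilde\beta_1)=c$. The cases $k\le1$, i.e. $f$ smooth or $d=1$, are checked directly.

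\textbf{Lower bound: reduction.} For any divisor $E$ over $o$, let $v=v_E\in\mathcal V_{\rm div}$ be its normalized valuation. By Lemma~\ref{lemma: Geometric_Meaning_of_Farey_Parameters} and Theorem~\ref{thm: Isom_between_Gamma_and_Vm},
\[
\frac{A_X(E)}{\operatorname{ord}_E(f)}=\frac{a(E)}{b(E)\,v(f)}=\frac{A^{\rm thin}(v)}{v(f)} .
\]
Put $w=v\wedge v_f$. Proposition~\ref{prop:Using_Skewness_to_Complute_Semivaluation} gives $v(f)=w(f)=d\,\alpha(w)$. Since $A^{\rm thin}$ is increasing, $A^{\rm thin}(v)\ge A^{\rm thin}(w)$. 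Hence it suffices to prove
\[
A^{\rm thin}(w)\ge \Bigl(1+\tfrac1{\tilde\beta_1}\Bigr)\alpha(w)\quad\text{for all } w\in[\operatorname{ord}_\mathfrak{m},v_f].
\]
This extends to non-divisorial $w$ by density, or can be applied directly since the integral formula makes sense for them.

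\textbf{Lower bound: the segment estimate.} Parametrize the segment by skewness $t=\alpha(w)\in[1,\infty)$.

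On $[\operatorname{ord}_\mathfrak{m},v_{1,\tilde\beta_1}]$ the multiplicity is $1$, so $A^{\rm thin}=1+t$. Then $(1+t)/t$ is decreasing in $t$ and equals $1+1/\tilde\beta_1$ at $t=\tilde\beta_1$, so the inequality holds there.

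Beyond $v_{1,\tilde\beta_1}$, the derivative $dA^{\rm thin}/dt=m(w)\ge n_1\ge2$. Using $\frac{d}{dt}(A/t)=(mt-A)/t^2$, it suffices to show $m(w)t\ge A^{\rm thin}(w)$ there. At $t=\tilde\beta_1$ this reads $n_1\tilde\beta_1\ge 1+\tilde\beta_1$, which is true because $n_1\ge2$ and $\tilde\beta_1>1$. Afterwards, $mt-A$ is nondecreasing: its derivative is $t\,dm/dt\ge0$ (in the sense of jumps), since $m$ is nondecreasing along the segment. Hence $A^{\rm thin}/\alpha$ is minimized at $v_{1,\tilde\beta_1}$, giving exactly $c$. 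Moreover the infimum is attained by that divisor, which independently confirms the upper bound.

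\textbf{Main obstacle.} The delicate step is the setup rather than the estimate. One must identify $v_f(y)$ with $d\tilde\beta_1$, and justify the normalization $n_1\ge2$, which is what makes the formula correct as stated.
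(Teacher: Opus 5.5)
Your proof is correct and follows the same route as the paper. The upper bound comes from the Newton polyhedron of $U_k$ plus Howald's theorem. The lower bound comes from the reduction $A_X(E)/\operatorname{ord}_E(f)\geq A^{\textup{thin}}(w)/(d\,\alpha(w))$ with $w=v_E\wedge v_f$, together with the fact that $A^{\textup{thin}}/\alpha$ is minimized at the first jump of the multiplicity; your monotonicity of $m\alpha-A^{\textup{thin}}$ is exactly the paper's computation in its lemma on $A^{\textup{thin}}/\alpha$.

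Your extra normalization $n_1\geq 2$ is genuinely necessary, and $(y-x^2)^2-x^5$ is a valid counterexample to the statement as written. It is precisely what justifies the paper's step claiming $m(w)>1$ for all $w\in(v_y\wedge v_f,v_f)$ ``because $w$ is not monomial.'' That step fails when $n_1=1$: in your example, the valuations $\operatorname{val}[x,y,y-x^2;1,2,t]$ with $2<t\leq 5/2$ lie on that segment and have multiplicity $1$.
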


\begin{proof}
Under the assumption on the tangent cone of ${f}$, we can write $v_{f}= \operatorname{val}[(U_j)_0^k;(\tilde{\beta})_j^k]$ so that ${f} = U_k$, $1\leq k\leq \infty$, and $\tilde{\beta}_0<\tilde{\beta}_1$. We denote $\deg_y U_j$ by $d_j$ as usual. To prove the desired equality, we verify the $(\leq)$ and $(\geq)$ directions.

For the $(\leq)$ direction, we apply Theorem~\ref{thm:Newton_Polyhedron_of_U_j} to get
\begin{align*}
\operatorname{lct}_\mathfrak{m}({f})
&= \operatorname{lct}_\mathfrak{m}(U_k) \\
&\leq \operatorname{lct}_\mathfrak{m}(\textup{Monom}(U_k)) \\
&= \frac{1}{d_k}\left( 1 + \frac{\tilde{\beta}_0}{\tilde{\beta}_1} \right) \\
&= \frac{1}{v_{{f}}(x)}+\frac{1}{v_{f}(y)},
\end{align*}
where the inequality is from the fact that $\mathfrak{a}\subseteq\mathfrak{b}$ implies $\operatorname{lct}_\mathfrak{m}(\mathfrak{a})\leq\operatorname{lct}_\mathfrak{m}(\mathfrak{b})$, and the last equality is because $\tilde{\beta}_0 = v_{{f}}(x) = v_x({f}) = \alpha(v_x\wedge v_{f})\deg_y {f} = d_k$.

For the $(\geq)$ direction, let $\pi: Y\to X\coloneq\operatorname{Spec}\Bbbk[[x,y]]$ be a log resolution of $V({f})$, and pick an exceptional component $E\subseteq \pi^{-1}(\mathfrak{m})$. Our goal is to show that
\[
\frac{A_X(E)}{\operatorname{ord}_E(\pi^*{f})}\geq
\frac{1}{v_{{f}}(x)}+\frac{1}{v_{f}(y)}.
\]
We have the following estimate:
\begin{align*}
    \frac{A_X(E)}{\operatorname{ord}_E(\pi^*{f})}
    &= \frac{\operatorname{ord}_E(\pi^*\mathfrak{m})}{\operatorname{ord}_E(\pi^*{f})} \cdot \frac{A_X(E)}{\operatorname{ord}_E(\pi^*\mathfrak{m})}\\
    &= \frac{1}{v_E({f})} \cdot A^{\textup{thin}}(E)\\
    &= \frac{1}{v_\mathfrak{m}({f})\alpha(v_E\wedge v_{f})}\cdot A^{\textup{thin}}(v_E) \\
    &\geq \frac{1}{d_k}\cdot\frac{1}{\alpha( v_E\wedge v_{f})}\cdot A^{\textup{thin}}( v_E\wedge  v_{f}),
\end{align*}
where the second equality is from Theorem~\ref{thm: Isom_between_Gamma_and_Vm}. Hence, it remains to show the following inequality
\[
\frac{A^{\textup{thin}}(v_E\wedge v_{f})}{\alpha( v_E\wedge v_{f})} \geq  1 + \frac{\tilde{\beta}_0}{\tilde{\beta}_1}.
\]
Notice that $A^{\textup{thin}}/\alpha$ defines a real-valued function on the valuative tree $\mathcal{V}$. We claim that $A^{\textup{thin}}/\alpha$ satisfies the following lemma and postpone its proof to the end of this section.

\begin{lemma}
\label{lemma:A_over_alpha}
Let $ v\in\mathcal{V}$ with $m(v)<\infty$ (so $v$ is not infinitely singular), and let $ v_1$ be the first divisorial valuation after $ v_\mathfrak{m}$ in the approximating sequence of $ v$. Then $A^{\textup{thin}}/\alpha$ is strictly decreasing on $[ v_\mathfrak{m}, v_1]$ and strictly increasing on $[ v_1, v]$. Moreover, the minimum is $A^{\textup{thin}}( v_1)/\alpha( v_1) = 1 + 1/\alpha( v_1)$.
\end{lemma}

Setting $ v =  v_{f}$ in Lemma~\ref{lemma:A_over_alpha}, we now show that $ v_1 =  v_y\wedge v_{f}$. On $[v_\mathfrak{m}, v_y\wedge v_{f}]$, the multiplicity function $m$ is 1, so $ v_1\geq  v_y\wedge v_{f}$. Conversely, consider a valuation $ w \in ( v_y\wedge v_{f},  v_{f})$. Because $ w$ is not monomial, we must have $m(w)>1$; therefore, $ v_1\leq  v_y\wedge v_{f}$.

Since $ v_E\wedge v_{f}\in [ v_\mathfrak{m}, v_{f})$, by Lemma \ref{lemma:A_over_alpha} we obtain
\begin{align*}
\frac{A^{\textup{thin}}( v_E\wedge v_{f})}{\alpha( v_E\wedge v_{f})}
\geq \frac{A^{\textup{thin}}( v_y\wedge v_{f})}{\alpha( v_y\wedge v_{f})}
= 1+\frac{1}{\alpha( v_y\wedge v_{f})}
= 1+\frac{\tilde{\beta}_0}{\tilde{\beta}_1}.
\end{align*}
This completes the proof of the theorem.
\end{proof}

\begin{proof}[Proof of Lemma~\ref{lemma:A_over_alpha}]
Let $[(v_j)_0^g;(m_j)_0^g]$, $0\leq g < \infty$, be the approximating sequence of $v$. Recall from the definition of approximating sequence that $[(v_j)_0^g;(m_j)_0^g]$ consists of an increasing sequence of semivaluations
\[ 
v_\mathfrak{m} =  v_0 <  v_1 <  v_2 < \cdots <  v_g <  v_{g+1} =  v
\]
and an increasing sequence of positive integers $1 = m_1 < m_2 < \cdots < m_g < m_{g+1}$ such that $m( w) = m_j$ for $ w\in( v_{j-1}, v_j]$. Put $t_j = \alpha( v_j)$ for $j = 0,\dots,g+1$.

For $ w \in [v_\mathfrak{m},v_1]$, we see that the function
\[
\frac{A^{\textup{thin}}( w)}{\alpha( w)} = \frac{2 + 1\cdot(\alpha( w)-t_0)}{\alpha( w)}
= 1+\frac{1}{\alpha( w)}
\]
is strictly decreasing as $ w$ increases. Furthermore, $A( v_1)/\alpha( v_1) = 1+1/\alpha( v_1)$.

On the other hand, let $ w\in( v_{j-1}, v_j]$ for $j\geq 2$.
Because $m_l-m_{l+1} \leq -1$ (from Theorem~\ref{prop: SKP_Computes_Skewness_and_Multiplicity}, $m_l$ divides $m_{l+1}$ non-trivially) and $t_l>1$ for any $l>0$, the following computation
\begin{align*}
\frac{A^{\textup{thin}}( w)}{\alpha( w)}
&= \frac{1}{\alpha( w)}\left( 2 + \sum_{l=1}^{j-1}m_l(t_l-t_{l-1})+m_j(\alpha( w)-t_{j-1})\right) \\
&= m_j + \frac{1}{\alpha( w)}\left(1+\sum_{l=1}^{j-1}t_l(m_l-m_{l+1})\right)
\end{align*}
implies that $A^{\textup{thin}}/\alpha$ is strictly increasing on $( v_{j-1}, v_j]$. 
Since both $\alpha$ and $A^{\textup{thin}}$ are parameterizations, they are continuous (for the observer's topology) on $[v_{\mathfrak{m}}, v]$, and thus $A^{\textup{thin}}/\alpha$ is continuous on $[v_{\mathfrak{m}}, v]$ too, which shows that $A^{\textup{thin}}/\alpha$ is strictly increasing on $[v_1, v]$.
\end{proof}


\bibliographystyle{alpha}
\bibliography{reference2}

@misc{stacks-project,
  author       = {The {Stacks project authors}},
  title        = {The Stacks project},
  howpublished = {\url{https://stacks.math.columbia.edu}},
  year         = {2025},
}

@incollection {Mustata2012,
    AUTHOR = {Musta\c{t}\u{a}, Mircea},
     TITLE = {I{MPANGA} lecture notes on log canonical thresholds},
 BOOKTITLE = {Contributions to algebraic geometry},
    SERIES = {EMS Ser. Congr. Rep.},
     PAGES = {407--442},
      NOTE = {Notes by Tomasz Szemberg},
 PUBLISHER = {Eur. Math. Soc., Z\"urich},
      YEAR = {2012},
      ISBN = {978-3-03719-114-9},
   MRCLASS = {14B05 (13A35 14E30)},
  MRNUMBER = {2976952},
MRREVIEWER = {Ali\ Sinan\ Sert\"oz},
       DOI = {10.4171/114-1/16},
       URL = {https://doi.org/10.4171/114-1/16},
}

@incollection {deFernex&Ein&Mustata2011,
    AUTHOR = {de Fernex, Tommaso and Ein, Lawrence and Musta\c{t}\u{a},
              Mircea},
     TITLE = {Log canonical thresholds on varieties with bounded
              singularities},
 BOOKTITLE = {Classification of algebraic varieties},
    SERIES = {EMS Ser. Congr. Rep.},
     PAGES = {221--257},
 PUBLISHER = {Eur. Math. Soc., Z\"urich},
      YEAR = {2011},
      ISBN = {978-3-03719-007-4},
   MRCLASS = {14E15 (14B05 14E30)},
  MRNUMBER = {2779474},
MRREVIEWER = {Ali\ Sinan\ Sert\"oz},
       DOI = {10.4171/007-1/10},
       URL = {https://doi.org/10.4171/007-1/10},
}

@article {Coulbois2007,
    AUTHOR = {Coulbois, Thierry and Hilion, Arnaud and Lustig, Martin},
     TITLE = {Non-unique ergodicity, observers' topology and the dual
              algebraic lamination for {$\mathbb R$}-trees},
   JOURNAL = {Illinois J. Math.},
  FJOURNAL = {Illinois Journal of Mathematics},
    VOLUME = {51},
      YEAR = {2007},
    NUMBER = {3},
     PAGES = {897--911},
      ISSN = {0019-2082,1945-6581},
   MRCLASS = {20E08 (20E05 20F65 20F67 37A99 57M07)},
  MRNUMBER = {2379729},
MRREVIEWER = {Vincent\ Guirardel},
       URL = {http://projecteuclid.org/euclid.ijm/1258131109},
}

@book {SwansonHuneke2006,
    AUTHOR = {Huneke, Craig and Swanson, Irena},
     TITLE = {Integral closure of ideals, rings, and modules},
    SERIES = {London Mathematical Society Lecture Note Series},
    VOLUME = {336},
 PUBLISHER = {Cambridge University Press, Cambridge},
      YEAR = {2006},
     PAGES = {xiv+431},
      ISBN = {978-0-521-68860-4; 0-521-68860-4},
   MRCLASS = {13B22 (13A18 13A30 13A35 13H15 14A05)},
  MRNUMBER = {2266432},
MRREVIEWER = {Liam\ O'Carroll},
}

@book{FavreJonsson2004,
    AUTHOR = {Favre, Charles and Jonsson, Mattias},
     TITLE = {The valuative tree},
    SERIES = {Lecture Notes in Mathematics},
    VOLUME = {1853},
 PUBLISHER = {Springer-Verlag, Berlin},
      YEAR = {2004},
     PAGES = {xiv+234},
      ISBN = {3-540-22984-1},
   MRCLASS = {13A18 (12J20 13F30 14H20 37F99)},
  MRNUMBER = {2097722},
MRREVIEWER = {Karen\ E.\ Smith},
       DOI = {10.1007/b100262},
       URL = {https://doi.org/10.1007/b100262},
      NOTE = {The online version includes an errata.}
}

@article {Howald2001,
    AUTHOR = {Howald, J. A.},
     TITLE = {Multiplier ideals of monomial ideals},
   JOURNAL = {Trans. Amer. Math. Soc.},
  FJOURNAL = {Transactions of the American Mathematical Society},
    VOLUME = {353},
      YEAR = {2001},
    NUMBER = {7},
     PAGES = {2665--2671},
      ISSN = {0002-9947,1088-6850},
   MRCLASS = {14M25 (14E05 14Q99)},
  MRNUMBER = {1828466},
MRREVIEWER = {Shihoko\ Ishii},
       DOI = {10.1090/S0002-9947-01-02720-9},
       URL = {https://doi.org/10.1090/S0002-9947-01-02720-9},
}

@incollection {Kollar1997,
    AUTHOR = {Koll\'ar, J\'anos},
     TITLE = {Singularities of pairs},
 BOOKTITLE = {Algebraic geometry---{S}anta {C}ruz 1995},
    SERIES = {Proc. Sympos. Pure Math.},
    VOLUME = {62, Part 1},
     PAGES = {221--287},
 PUBLISHER = {Amer. Math. Soc., Providence, RI},
      YEAR = {1997},
      ISBN = {0-8218-0894-X; 0-8218-0493-6},
   MRCLASS = {14E30 (14E15 32S40)},
  MRNUMBER = {1492525},
MRREVIEWER = {Alessio\ Corti},
       DOI = {10.1090/pspum/062.1/1492525},
       URL = {https://doi.org/10.1090/pspum/062.1/1492525},
}

@incollection {Igusa1977,
    AUTHOR = {Igusa, J.},
     TITLE = {On the first terms of certain asymptotic expansions},
 BOOKTITLE = {Complex analysis and algebraic geometry},
     PAGES = {357--368},
 PUBLISHER = {Iwanami Shoten Publishers, Tokyo},
      YEAR = {1977},
   MRCLASS = {14G20 (10H25 12B30)},
  MRNUMBER = {485881},
MRREVIEWER = {Stephen\ Haris},
}

@article {MacLane1936,
    AUTHOR = {MacLane, Saunders},
     TITLE = {A construction for absolute values in polynomial rings},
   JOURNAL = {Trans. Amer. Math. Soc.},
  FJOURNAL = {Transactions of the American Mathematical Society},
    VOLUME = {40},
      YEAR = {1936},
    NUMBER = {3},
     PAGES = {363--395},
      ISSN = {0002-9947,1088-6850},
   MRCLASS = {13A18 (13F20)},
  MRNUMBER = {1501879},
       DOI = {10.2307/1989629},
       URL = {https://doi.org/10.2307/1989629},
}

\end{document}